\documentclass[a4paper,reqno,11pt,oneside]{amsart}
\usepackage{graphicx} 
\usepackage{amsmath}
\usepackage{amsthm}
\usepackage{amssymb}
\usepackage{color}
\usepackage{xcolor}
\usepackage{bbm}

\theoremstyle{plain}
\newtheorem{thm}{Theorem}
  \theoremstyle{plain}
  \newtheorem{lemma}[thm]{Lemma}
  \theoremstyle{plain}
  \newtheorem{corollary}[thm]{Corollary}
   \newtheorem{proposition}[thm]{Proposition}
  \theoremstyle{remark}
  \newtheorem{remark}[thm]{Remark}
  \newtheorem*{rem*}{Remark}
\def\div{\textup{div}}

\begin{document}
\title[On the simplicity of the sloshing eigenvalues  ]{On the simplicity of the sloshing  eigenvalues}
 
\author{Marco Ghimenti}
\address[Marco Ghimenti]{Dipartimento di Matematica
Universit\`a di Pisa
Largo Bruno Pontecorvo 5, I - 56127 Pisa, Italy}
\email{marco.ghimenti@unipi.it }

\author{Anna Maria Micheletti}
\address[Anna Maria Micheletti]{Dipartimento di Matematica
Universit\`a di Pisa
Largo Bruno Pontecorvo 5, I - 56127 Pisa, Italy}
\email{a.micheletti@dma.unipi.it }

\author{Angela Pistoia}
\address[Angela Pistoia] {Dipartimento SBAI, Universit\`{a} di Roma ``La Sapienza", via Antonio Scarpa 16, 00161 Roma, Italy}
\email{angela.pistoia@uniroma1.it}

\thanks{The first author is partially supported by the MIUR Excellence Department Project awarded to the Department of Mathematics, University of Pisa, CUP I57G22000700001 and by the PRIN 2022 project 2022R537CS \emph{$NO^3$ - Nodal Optimization, NOnlinear elliptic equations, NOnlocal geometric problems, with a focus on regularity}, founded by the European Union - Next Generation EU. The third author is partially supported by the MUR-PRIN-20227HX33Z ``Pattern formation in nonlinear phenomena''. 
The research of the authors is partially supported by the GNAMPA project 2024: ``Problemi di doppia
curvatura su variet\`a a bordo e legami con le EDP di tipo ellittico''}

\begin{abstract}
This paper investigates sloshing problems defined by $-\Delta u=0$ in $\Omega$, with mixed boundary conditions: $\partial_{\nu}u=\lambda u$ on $S$, and either $\partial_{\nu}u=0$ or $u=0$ on $W$. Here, $\Omega$ represents a smooth bounded domain in $\mathbb{R}^n$ with boundary $\partial\Omega=S \cup W$. We demonstrate that under small domain perturbations, all resulting eigenvalues are simple.
\end{abstract}

\keywords{ Sloshing problem, Steklov eigenvalues, generic properties, simplicity}

\subjclass{35J60, 35P05}
\maketitle

\section{Introduction}

We examine two problems involving Steklov eigenvalues with mixed boundary conditions, which are fundamentally linked to the {\em sloshing problem}—a classic topic in fluid mechanics. This problem describes the linear eigenvalue system governing the small oscillations of the free surface of an ideal fluid under gravity. Mathematically, this corresponds to finding the eigenvalues for the mixed Steklov-Neumann boundary value problem:
 \begin{equation}\label{Slosh0}  
   \left\{
    \begin{array}{cc}
         -\Delta u=0& \text{ on } \Omega\\
        \partial_\nu u=\lambda u & \text{ in } S\\
        \partial_\nu u=0 & \text{ in }  W
    \end{array}.
    \right.
\end{equation}
where $\Omega \subset \mathbb{R}^n$ ($n > 2$) is a connected bounded domain with boundary components $S$ (free surface) and $W$ (rigid walls). The eigenvalues $\lambda$ relate to the sloshing frequencies $\omega$ through $\lambda = \omega^2/g$. We also consider the case with a homogeneous Dirichlet condition on $W$ 
\begin{equation}\label{Slosh} 
   \left\{
    \begin{array}{cc}
         -\Delta u=0& \text{ on } \Omega\\
        \partial_\nu u=\lambda u & \text{ in } S\\
      u=0  & \text{ in }  W
    \end{array}.
    \right.
 \end{equation}

These problems also arise in the study of Cauchy processes (see \cite{baku}) and serve as models for stationary heat distribution where the boundary $W$ is either insulated  (Neumann condition) or maintained at zero temperature (Dirichlet condition), while the heat flux through $S$ is proportional to the temperature (Steklov condition). If $W = \emptyset$, the system simplifies to the classical Steklov eigenvalue problem
\begin{equation}\label{eq:steklov} 
   \left\{
    \begin{array}{cc}
         -\Delta u=0& \text{ on } \Omega\\
        \partial_\nu u =\lambda u & \text{ in } \partial\Omega\\
    \end{array}
    \right.
\end{equation}

It is worth noting that mixed Steklov-type eigenvalue problems have attracted considerable attention in recent years. We refer the reader to the insightful papers \cite{cggs,gp1,gipo}, which provide both a comprehensive collection of open problems and an exhaustive list of references on the subject.\\

It is well known (see \cite{agra, gipo}) that both \eqref{Slosh0} and \eqref{Slosh} possess a sequence of eigenvalues, counted with their multiplicities, denoted by$$  \lambda_1 < \lambda_2 \le \lambda_3 \le \dots$$It is straightforward to verify that the first eigenvalue is simple. For problem \eqref{Slosh0}, $\lambda_1 = 0$ and the associated eigenfunctions are the constant functions.
Regarding the classical sloshing problem \eqref{Slosh0}, it has been conjectured in \cite{kkm, gp1} that, in two dimensions, all eigenvalues are simple. In \cite{lpps}, the authors proved that for certain Lipschitz planar domains—specifically those with a flat free surface—all eigenvalues $\lambda_k$ are simple provided that $k$ is sufficiently large.While this conjecture remains open in the general case, we prove here that it is generically true. More precisely, we show that for any given domain $\Omega$, there exists an arbitrarily small perturbation such that all eigenvalues of the modified problems \eqref{Slosh0} and \eqref{Slosh} become simple. Our analysis specifically focuses on perturbations that leave one component of the boundary fixed.\\

The domain which we consider is an open connected bounded domain $\Omega$ in $\mathbb{R}^n$, whose boundary $\partial \Omega $ is $C^{1,1}$ and it is composed of two sets $S$ and $W$, with non empty $C^2$ disjoint internal parts  $\mathring S$ and $\mathring W$ with  $\mathring S\cap\mathring W=\emptyset$. \\
We introduce  a suitable Banach space of perturbations:
$$ \mathcal{D}:=\{\psi\in C^2(\mathbb{R}^n,\mathbb{R}^n),\ \|\psi\|:=\max_{i=0,1,2}\sup_{x\in\mathbb{R}^n}|\psi^{(i)}|<+\infty\}.$$
Given $\psi\in \mathcal{D}$  we define the perturbed domain by $\Omega_\psi := (I+\psi)(\Omega)$, and we rewrite  problem (\ref{Slosh0}) and  (\ref{Slosh}) on the domain $\Omega_\psi$.

Our main results are stated as follows.

\begin{thm}\label{th:SD}
For any $\varepsilon > 0$, there exists a perturbation $\psi \in \mathcal{D}$ with $\|\psi\| < \varepsilon$, leaving either $S$ or $W$ fixed, such that all eigenvalues of the Steklov-Dirichlet problem
\begin{equation}\label{S-Dpert}
\left\{
    \begin{array}{cc}
         -\Delta u=0& \text{ on } \Omega_\psi\\
        \partial_\nu u=\lambda u & \text{ on } (I+\psi)S=:S_\psi,\\
      u=0  & \text{ on }  (I+\psi)W=:W_\psi,
    \end{array}.
    \right.
\end{equation}
are simple.
 
\end{thm}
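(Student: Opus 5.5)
The plan is the classical genericity scheme of Micheletti and Uhlenbeck, based on a transversality (Sard--Smale type) argument or, more concretely, on an iterative splitting of multiple eigenvalues.

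First, we pull problem \eqref{S-Dpert} back to the fixed domain $\Omega$ via $x\mapsto (I+\psi)(x)$. This gives a family of eigenvalue problems on $\Omega$ of the form
\begin{equation*}
-\div\bigl(A_\psi\nabla u\bigr)=0 \text{ in } \Omega,\qquad A_\psi\nabla u\cdot\nu=\lambda\,\rho_\psi u \text{ on } S,\qquad u=0 \text{ on } W,
\end{equation*}
where the matrix $A_\psi$ and the weight $\rho_\psi$ (the boundary Jacobian) depend smoothly (analytically) on $\psi$ near $0$. Equivalently, we consider the compact self-adjoint operator $T_\psi$ on $L^2(S)$ (or on $H^1_W(\Omega)$ with the boundary inner product) that sends Steklov data to the trace of the solution. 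Standard perturbation theory then shows that the eigenvalues depend continuously on $\psi$. A simple eigenvalue stays simple under small perturbations, and the set of $\psi$ for which the first $N$ eigenvalues are simple is open.

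The core step is a \emph{splitting lemma}. Let $\lambda$ be an eigenvalue of multiplicity $m>1$ for $\Omega$, with an $L^2(S)$-orthonormal basis of eigenfunctions $u_1,\dots,u_m$. The derivative of the eigenvalue branches in a direction $\psi$ is given by the eigenvalues of the $m\times m$ symmetric matrix
\begin{equation*}
M_{ij}(\psi)=\frac{d}{dt}\Bigl[\int_{\Omega}A_{t\psi}\nabla u_i\cdot\nabla u_j-\lambda\int_S\rho_{t\psi}u_iu_j\Bigr]_{t=0}.
\end{equation*}
A Hadamard-type computation expresses $M_{ij}(\psi)$ as a boundary integral. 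Suppose we fix $S$ and move $W$, taking $\psi$ supported near $\mathring W$ with $\psi\cdot\nu=\phi$. Since $u_i=0$ on $W$, the formula reduces to
\begin{equation*}
M_{ij}(\psi)=-\int_W \phi\,\partial_\nu u_i\,\partial_\nu u_j.
\end{equation*}
If instead we fix $W$ and move $S$, we obtain an expression of the form $\int_S\phi\,(\nabla_\tau u_i\cdot\nabla_\tau u_j-\lambda^2u_iu_j-\lambda H u_iu_j)$, involving the mean curvature $H$ of $S$. The multiplicity can be reduced by a small perturbation unless $M(\psi)$ is a multiple of the identity for every admissible $\phi$. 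We argue by contradiction and assume it is. In the case where $W$ moves, this gives $\partial_\nu u_i\partial_\nu u_j=0$ and $(\partial_\nu u_i)^2=(\partial_\nu u_j)^2$ on $\mathring W$ for $i\ne j$. Hence $\partial_\nu u_1\equiv0$ on an open piece of $W$ where $u_1=0$ as well. Unique continuation for harmonic functions (Holmgren/Cauchy data) then forces $u_1\equiv0$, a contradiction. In the case where $S$ moves, the same conclusion gives $u_1(\nabla_\tau\cdots)$ relations on $S$, and we have to combine them with the Steklov condition to obtain vanishing Cauchy data on an open subset of $S$. This step is the one I expect to be the main obstacle. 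Our first approach would be to use the identity with $i\ne j$ together with the diagonal identities to show that $u_1$ and $u_2$ must have proportional Cauchy data on an open set. Unique continuation would then give $u_1=cu_2$, contradicting orthogonality.

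Finally, we iterate. Given $\varepsilon$, split each multiple eigenvalue with successively smaller perturbations: this makes $\lambda_1,\dots,\lambda_N$ simple for each $N$. Since the sets $\mathcal{A}_N=\{\psi:\lambda_1,\dots,\lambda_N\text{ simple}\}$ are open and dense in a small ball of the (complete) space of admissible perturbations, Baire's theorem gives a $\psi$ with $\|\psi\|<\varepsilon$ lying in $\bigcap_N\mathcal{A}_N$. Alternatively, we can choose perturbations with geometrically decreasing norms, making sure that simplicity already obtained is preserved at each step. In either case we obtain a single perturbation for which every eigenvalue of \eqref{S-Dpert} is simple.
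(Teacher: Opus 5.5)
Your argument for perturbations that keep $S$ fixed and move $W$ is correct and is essentially the paper's (Subsection~\ref{SDpertW}). The no-splitting matrix reduces to $\int_W\phi\,\partial_\nu u_i\,\partial_\nu u_j\,d\sigma$. This forces $\partial_\nu u_i\,\partial_\nu u_j=0$ for $i\neq j$ and $(\partial_\nu u_1)^2=\dots=(\partial_\nu u_m)^2$, hence zero Cauchy data on $\mathring W$, and unique continuation gives the contradiction. Your final step then yields the theorem with $S$ fixed; the Baire variant is a legitimate alternative to the paper's explicit iteration with summable norms.

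The genuine gap is the other half of the statement, which the authors clearly intend: one may also keep $W$ fixed and move only $S$ (see Proposition~\ref{theorem:main-tool}, and the first remark, where $W=\emptyset$ and only $S$ can move). Here you offer only a hope. By your Hadamard formula, which is correct, persistence of multiplicity means the following holds pointwise on $\mathring S$: $\nabla_\tau u_i\cdot\nabla_\tau u_j=(\lambda^2+\lambda H)u_iu_j$ for $i\neq j$, and $|\nabla_\tau u_i|^2-(\lambda^2+\lambda H)u_i^2$ is independent of $i$. These are pointwise quadratic relations between the traces $u_i|_S$ and their tangential gradients. You give no mechanism by which they force $u_1=c\,u_2$, let alone $u_1=0$, on an open subset of $S$, which is what unique continuation would need. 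In the $W$ case the Dirichlet condition turns the constraint directly into one on $\partial_\nu u$; on $S$ nothing comparable happens, and this is exactly where a new idea is required.

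The paper handles this case with a different device, which you should not adopt. It keeps the full field $\psi$ rather than $\phi=\psi\cdot\nu$. It replaces $\int_S\psi\cdot\nabla(e_re_s)\,d\sigma$ by $-\int_S\div\psi\,e_re_s\,d\sigma$ and arrives at $C(\psi)\delta_{rs}=-\mu\int_S(\psi\cdot\nu)\nabla e_r\cdot\nabla e_s\,d\sigma-\int_S e_re_s\,\nu\cdot\partial_\nu\psi\,d\sigma$. It then lets $\nu\cdot\partial_\nu\psi$ vary freely on $S$ to get $e_re_s=0$ and $e_1^2=\dots=e_m^2$, hence $e_r=\partial_\nu e_r=0$ on $S$.

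This does not hold up. A field with $\psi=0$ on $\partial\Omega$ and arbitrary normal derivative gives $\Omega_\psi=\Omega$, so it cannot split anything. Consistently, before that integration by parts every term depends only on $\psi|_S$: the term $G$ involves $\div\psi-\nu\cdot\partial_\nu\psi=\mathrm{div}_S\psi=\mathrm{div}_S\psi_\tau+H\,\psi\cdot\nu$. The correct identity on $S$ is $\int_S\psi\cdot\nabla(e_re_s)\,d\sigma=-\int_S e_re_s\,\mathrm{div}_S\psi_\tau\,d\sigma+2\lambda\int_S(\psi\cdot\nu)\,e_re_s\,d\sigma$. Done this way, no normal derivative of $\psi$ survives, and one recovers exactly your curvature formula.

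So the $W$-fixed half of the theorem still needs an argument that the pointwise relations above are incompatible with $m>1$. Neither your proposal nor the paper's decoupling supplies it.
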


\begin{thm}\label{th:SN}
For any $\varepsilon > 0$, there exists a perturbation $\psi \in \mathcal{D}$ with $\|\psi\| < \varepsilon$ leaving $W$ fixed, such that all eigenvalues of the Steklov-Neumann  problem
\begin{equation}\label{S-Npert}
\left\{
    \begin{array}{cc}
         -\Delta u=0& \text{ on } \Omega_\psi\\
        \partial_\nu u=\lambda u & \text{ on } (I+\psi)S=:S_\psi,\\
    \partial_\nu  u=0  & \text{ on }  (I+\psi)W=:W_\psi,
    \end{array}.
    \right.
\end{equation}
are simple. In addition, if $\Omega \subset \mathbb{R}^2$, it is also possible to find a perturbation leaving $S$ fixed such that all eigenvalues of \eqref{S-Npert} are simple.
\end{thm}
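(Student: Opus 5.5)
The plan follows the classical Micheletti/Uhlenbeck scheme for generic simplicity. The first step is to pull the perturbed problem \eqref{S-Npert} back to the fixed domain $\Omega$ via the map $I+\psi$. This gives a family of operators depending smoothly on $\psi$ near $0$ in $\mathcal{D}$. The natural formulation is through the Steklov--Neumann (Dirichlet-to-Neumann type) operator: for $f$ on $S$, let $u$ solve the pulled-back harmonic problem with $u=f$ on $S$ and $\partial_\nu u=0$ on $W$, and set $T_\psi f$ equal to the trace of $u$ on $S$ of the problem with datum $f$ as Neumann flux. This yields a compact self-adjoint operator on $L^2(S)$, for the weighted inner product induced by the Jacobian of $I+\psi$. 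Its eigenvalues are $1/\lambda_k(\psi)$, restricted to the orthogonal of constants, and it depends smoothly on $\psi$.

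The second step is a standard abstract lemma (Micheletti's transversality lemma, or equivalently a Baire argument). The set of $\psi$ for which the first $N$ eigenvalues are simple is open, so it suffices to prove density. By induction on $N$ and intersecting countably many open dense sets, density reduces to the following local statement. Suppose $\lambda$ is an eigenvalue of multiplicity $m\ge 2$ for $\psi_0$, with orthonormal eigenfunctions $u_1,\dots,u_m$. One must show that the symmetric bilinear form
\[
\psi\mapsto \Big(\tfrac{d}{dt}\lambda\text{-splitting}\Big)_{ij}=\int_{\partial\Omega_{\psi_0}} \big(\nabla u_i\cdot\nabla u_j-\lambda^2 u_iu_j-\lambda H u_iu_j\cdot(\ldots)\big)\,\psi\cdot\nu
\]
(the Hadamard-type shape derivative) is not of the form $c(\psi)\,\delta_{ij}$ for all admissible $\psi$. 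If that held, the multiplicity could not split, which is a contradiction; otherwise a perturbation lowers the multiplicity. Iterating gives simplicity of all eigenvalues below any level, and a diagonal argument with $\|\psi\|<\varepsilon$ handles all of them.

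The third step, and the main obstacle, is computing the shape derivative and deducing unique continuation. For perturbations supported near $\mathring W$ (so $S$ is fixed), the derivative on $W$ of the $(i,j)$ entry is $\int_W (\nabla_\tau u_i\cdot\nabla_\tau u_j)\,\psi\cdot\nu$, since $\partial_\nu u=0$ there. If this equals $c\,\delta_{ij}$ for every $\psi$, we get $\nabla_\tau u_i\cdot\nabla_\tau u_j=c\delta_{ij}$ pointwise on $\mathring W$. We then take the combination $u=u_1-u_2$ and the pairs $(u_1,u_2)$, $(u_1\pm u_2)$ to obtain $|\nabla_\tau u_1|^2=|\nabla_\tau u_2|^2$ and $\nabla_\tau u_1\cdot\nabla_\tau u_2=0$. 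In dimension $2$, where $W$ is a curve, this is impossible unless both tangential derivatives vanish. In that case $u_1$ is constant on a piece of $W$ with $\partial_\nu u_1=0$, and Cauchy-data unique continuation for harmonic functions gives $u_1$ constant, a contradiction for $\lambda>0$. This is exactly why the $S$-fixed statement is restricted to $\mathbb{R}^2$; in higher dimensions orthogonal equal-length tangential gradients are not excluded.

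For perturbations moving $S$ (with $W$ fixed), the derivative involves $\int_S\big(|\nabla_\tau|\text{-terms}-\lambda^2u_iu_j-\lambda H u_iu_j\big)\psi\cdot\nu$, with $H$ the mean curvature. Here I would instead exploit the terms $u_iu_j$. I would choose $\psi$ concentrated near a point and test against several profiles, or alternatively use the $\lambda$-dependence and the Steklov relation, to get $u_iu_j$ proportional to $\delta_{ij}$ on an open subset of $S$ in any dimension. Then $u_1u_2\equiv0$ and $u_1^2=u_2^2$ there force $u_1=u_2=0$ on an open set of $S$, with $\partial_\nu u=\lambda u=0$, and unique continuation gives a contradiction. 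Separating the zeroth-order term from the gradient term is the delicate point. I expect to handle it by replacing $\psi\cdot\nu$ with test functions whose tangential derivatives are large relative to their size, or by a clever algebraic combination of the three matrix identities. I will verify this carefully, since it is the crux.
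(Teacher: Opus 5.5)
Your overall scheme (pull back to $\Omega$, Micheletti's no-splitting criterion, then iteration or Baire) is the paper's. Your treatment of the second assertion ($S$ fixed, $n=2$) is correct and is essentially the paper's argument. You even close the degenerate case $\partial_\tau u_i\equiv0$, using the Cauchy data (constant, $0$) on $W$ and unique continuation; the paper's ``orthonormal set in $\mathbb{R}$'' argument glosses over this case.

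\textbf{The gap.} The first assertion ($W$ fixed, $S$ perturbed, any $n$) is left open, and the repairs you suggest cannot work. By your own Hadamard-type computation, the first variation of the $(i,j)$ entry is $\int_S M_{ij}\,g\,d\sigma$, where $g=\psi\cdot\nu$ and the density is fixed:
\[
M_{ij}=\nabla_\tau u_i\cdot\nabla_\tau u_j-(\lambda^2+\lambda H)\,u_iu_j ,\qquad H=\mathrm{div}_S\,\nu .
\]
Only the values of $g$ enter. So the no-splitting condition is equivalent to the single pointwise identity $M_{ij}=c(x)\,\delta_{ij}$ on $\mathring S$. Test functions with large tangential derivatives give nothing more, and there is no further independent identity to combine algebraically.

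What is missing is an argument that $M_{ij}=c\,\delta_{ij}$ on an open subset of $S$ is incompatible with $m\ge2$, and this is not automatic.
\begin{itemize}
\item For $n=2$, set $w=u_1+iu_2$; the condition reads $(\partial_\tau w)^2=(\lambda^2+\lambda H)w^2$. On an arc where $\lambda^2+\lambda H>0$ (e.g.\ if $\lambda>\sup_S|H|$), it forces $\arg w$ to be locally constant. Then some nontrivial combination of $u_1,u_2$ has zero Cauchy data on an arc, and unique continuation gives a contradiction.
\item Where $\lambda^2+\lambda H<0$, the same condition only gives $|w|$ constant, which is no contradiction by itself.
\item For $n\ge3$, the pointwise identity is algebraically consistent as soon as $m\le n-1$.
\end{itemize}
A genuinely new idea is needed in these last two cases.

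\textbf{You cannot borrow the paper's way out.} The paper separates the two terms exactly as you wish to. It first takes $\partial_\nu\psi=0$ on $S$ to isolate $\nabla e_r\cdot\nabla e_s$. It then treats $\sum_r\nu_r\partial_\nu\psi_r$ on $S$ as a free function to get $e_re_s=c\,\delta_{rs}$, hence $e_r=\partial_\nu e_r=0$ on $S$. That freedom comes from the step
\[
\int_S\psi\cdot\nabla(e_re_s)\,d\sigma=-\int_S e_re_s\,\mathrm{div}\,\psi\,d\sigma,
\]
which is not valid on a hypersurface. The correct identity is
\[
\int_S\psi\cdot\nabla f\,d\sigma=-\int_S f\Big(\mathrm{div}\,\psi-\sum_r\nu_r\partial_\nu\psi_r\Big)d\sigma+\int_S(\psi\cdot\nu)\,(Hf+\partial_\nu f)\,d\sigma .
\]
With it, the $\partial_\nu\psi$-dependent terms cancel exactly against $G$ (because $\mu\lambda=1-\mu$). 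What remains is $-\mu\int_S(\psi\cdot\nu)\,M_{rs}\,d\sigma$, which is precisely your density. This outcome is forced anyway: a $\psi$ vanishing on $\partial\Omega$ gives $\Omega_\psi=\Omega$ and cannot split anything. So the crux you flagged is real, and the paper's argument does not resolve it either.

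A minor point: your $L^2(S)$ formulation, restricted to mean-zero functions, lives in a $\psi$-dependent space. The paper avoids this by shifting to $\hat\lambda=\lambda+1$ and using the form $\hat a_\Omega$ on all of $H^1(\Omega)$.
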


Let us make some comments.

\begin{remark}
As a consequence of Theorem \ref{th:SD}, we deduce that all non-zero Steklov eigenvalues on a bounded domain are generically simple. More precisely, if $\Omega$ is a smooth bounded domain with $C^{1,1}$ boundary, for any $\varepsilon > 0$ there exists a perturbation $\psi \in \mathcal{D}$ with $\|\psi\| < \varepsilon$ such that all non-zero eigenvalues of the Steklov problem
\begin{equation}\label{eq:steklov-pert}
\left\{
    \begin{array}{cc}
         -\Delta u=0& \text{ on } \Omega_\psi\\
        \partial_\nu u=\lambda u & \text{ on } \partial \Omega_\psi,\\
           \end{array}.
    \right.
 \end{equation}
are simple.
The generic simplicity of Steklov eigenvalues was originally established by Wang \cite{wang} on compact Riemannian manifolds with boundary, following the method developed in \cite{u}. This result was recently extended by the same author to bounded Euclidean domains in a preprint \cite{wangpre}. Here, we recover the generic simplicity of Steklov eigenvalues on bounded domains using an entirely different technique.
\end{remark}

\begin{remark}
We note that these results imply that the eigenvalues of problems \eqref{Slosh0} and \eqref{Slosh} are simple for a generic domain $\Omega$. Indeed, for any given domain $\Omega$, there exists an arbitrarily close perturbed domain $\Omega_\psi = (I + \psi)\Omega$ such that all eigenvalues of the corresponding problem are simple. Furthermore, the perturbation $\psi$ can be chosen to act on only one component of the boundary. Notably, in our construction, the perturbations $\psi$ are chosen so as not to affect the interface $S \cap W$. It would be of particular interest to investigate the behavior of the eigenvalues when the perturbations are supported precisely on this interface, which remains a compelling direction for future research.
\end{remark}

\begin{remark}
We point out that for $\Omega \subset \mathbb{R}^n$, the statement of Theorem \ref{th:SN} could be refined: one can find a perturbation leaving $S$ fixed such that all eigenvalues of \eqref{S-Npert} have multiplicity at most $n-1$ (see Remark \ref{rem-n3}). It remains an open question whether this result is sharp.
\end{remark}

All the proofs  are based on an appropriate application of Micheletti's approach to simplicity of eigenvalues (see for example \cite{lupo,mi1,mi2}). The fundamental tool of this approach is the following abstract result (see \cite{mi2} and \cite[Section 3]{fgmp}) which provides a non splitting condition for eigenvalues under perturbation of the domain.

\begin{thm}\label{thm:astratto} 
Let $T_{b}:X\rightarrow X$ be a compact, self-adjoint operator on a Hilbert space $X$, which depending smoothly on a parameter $b$ in a real Banach space $B$. 
Assume that $T_{b}$ is Frechét differentiable in $b=0$. Let $\bar{\lambda}$ an eigenvalue for $T_0$ with multiplicity $m>1$  and let $x_{1}^{0},\dots,x_{m}^{0}$ form an orthonormal basisof the corresponding eigenspace. If  $\lambda(b)$ is an eigenvalue for $T_{b}$ such that $\lambda(0)=\bar\lambda$ and  $\lambda(b)$ maintains multiplicity $m$ for all $b$ with $\|b\|_{C^{0}}$ small, then for all such $b$ there exist a $\rho=\rho(b)\in\mathbb{R}$
for which
\begin{equation}
\left\langle T'(0)[b]x_{j}^{0},x_{i}^{0}\right\rangle _{X}=\rho\delta_{ij}\text{ for }i,j=1,\dots,m.
\label{eq:spezzamentoastratto}
\end{equation}
\end{thm}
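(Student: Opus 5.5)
The plan is to prove Theorem \ref{thm:astratto} by a Lyapunov--Schmidt type reduction onto the perturbed eigenspace, followed by differentiation at $b=0$. Since $T_0$ is compact and self-adjoint and $\bar\lambda$ is an isolated eigenvalue of multiplicity $m$, we fix a small circle $\Gamma$ around $\bar\lambda$ that encloses no other point of the spectrum of $T_0$. By continuity of $b\mapsto T_b$ in operator norm, for $\|b\|$ small $\Gamma$ also lies in the resolvent set of $T_b$. The Riesz projection
\[
P_b=-\frac{1}{2\pi i}\oint_\Gamma (T_b-z)^{-1}\,dz
\]
is then an orthogonal projection, because $T_b$ is self-adjoint. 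It depends smoothly on $b$, is Fréchet differentiable at $0$, has rank $m$, and its range is the total eigenspace of $T_b$ for eigenvalues inside $\Gamma$.

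The hypothesis that $\lambda(b)$ keeps multiplicity $m$ means that all the spectrum of $T_b$ inside $\Gamma$ collapses to the single eigenvalue $\lambda(b)$. Hence $T_bP_b=\lambda(b)P_b$. Put $x_j(b)=P_bx_j^0$; for small $b$ these form a basis of $\mathrm{Ran}\,P_b$. Set
\[
M_{ij}(b)=\langle T_b x_j(b),x_i(b)\rangle=\lambda(b)\langle x_j(b),x_i(b)\rangle=\lambda(b)G_{ij}(b).
\]
The heart of the argument is to differentiate this identity at $b=0$ in a direction $b$, replacing $b$ by $tb$ and taking $d/dt$ at $t=0$.

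First one checks that $\lambda(tb)$ is differentiable at $t=0$. This follows from $\lambda(b)=\frac1m\mathrm{tr}(T_bP_b)$, which is differentiable. Next, since $P_0x_j^0=x_j^0$, we have $x_j'=P'[b]x_j^0$. Differentiating $P^2=P$ gives $P_0P'P_0=0$, so $x_j'\perp \mathrm{Ran}\,P_0$. Then:
\begin{itemize}
\item[(a)] $G'_{ij}=\langle x_j',x_i^0\rangle+\langle x_j^0,x_i'\rangle=0$;
\item[(b)] $M'_{ij}=\langle T'[b]x_j^0,x_i^0\rangle+\langle T_0x_j',x_i^0\rangle+\langle T_0x_j^0,x_i'\rangle$.
\end{itemize}
By self-adjointness, $\langle T_0x_j',x_i^0\rangle=\bar\lambda\langle x_j',x_i^0\rangle=0$, and similarly for the last term. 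So $M'_{ij}=\langle T'(0)[b]x_j^0,x_i^0\rangle$.

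On the other side, differentiating $\lambda G_{ij}$ gives $\lambda'(0)[b]\,\delta_{ij}+\bar\lambda G'_{ij}=\lambda'(0)[b]\,\delta_{ij}$. This yields \eqref{eq:spezzamentoastratto} with $\rho=\lambda'(0)[b]$. The statement "for all such $b$" is understood in this derivative sense: for each direction $b$, $\rho(b)$ is linear in $b$. Note also that the whole argument needs multiplicity $m$ along the segment $tb$, so we use the hypothesis on a small ball.

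The main obstacle is justifying the regularity. We must make sure the smooth dependence of $T_b$ gives differentiability of $P_b$ (via the resolvent identity and the uniform bound of $(T_b-z)^{-1}$ on $\Gamma$), and hence of $\lambda(b)$ through the trace formula. This is also the place where the constancy of the multiplicity is really used: without it, only the sum of the eigenvalues inside $\Gamma$ would be differentiable. An alternative route would be to compare $P_bT_bP_b=\lambda(b)P_b$ directly with first-order perturbation theory, which says the derivatives of the eigenvalues are the eigenvalues of the matrix $(\langle T'x_j^0,x_i^0\rangle)$. Equal eigenvalues of a symmetric matrix force it to be scalar, but that route needs Rellich-type analyticity, so I would stick to the projection argument.
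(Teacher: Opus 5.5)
The paper does not prove Theorem~\ref{thm:astratto}. It quotes it from \cite{mi2} and \cite[Section 3]{fgmp}, so there is no internal argument to match yours against. Your Riesz-projection proof is correct and self-contained:
\begin{itemize}
\item differentiating $P_b^2=P_b$ gives $P_0P'(0)[b]P_0=0$, hence $x_j'\perp\ker(T_0-\bar\lambda)$;
\item self-adjointness then kills the two $T_0$-terms in $M'_{ij}$, and $G'_{ij}=0$;
\item so $\rho(b)=\frac{d}{dt}\lambda(tb)|_{t=0}=\langle T'(0)[b]x_1^0,x_1^0\rangle$, which is indeed linear in $b$.
\end{itemize}

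A few points deserve to be made explicit, and one remark of yours should be corrected.
\begin{itemize}
\item Isolation of $\bar\lambda$ needs $\bar\lambda\neq0$. In infinite dimensions, $0$ cannot be an isolated eigenvalue of finite multiplicity of a compact self-adjoint operator. This is harmless here, since the paper applies the result to $\mu=1/\lambda>0$.
\item The paper's $X=\mathcal{H}(\Omega)$ is a real space. The contour integral must therefore be taken on the complexification; the projection commutes with conjugation and restricts to $X$.
\item You tacitly use $\lambda(b)\to\bar\lambda$, which is what places $\lambda(b)$ inside $\Gamma$. This is the intended reading of the statement.
\item Differentiability of $\lambda(tb)$ follows more simply from $\lambda(tb)=M_{11}(tb)/G_{11}(tb)$ than from the trace formula.
\end{itemize}

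In fact you do not need differentiability of $P_b$ at all. From $T_{tb}P_{tb}x_j^0=\lambda(tb)P_{tb}x_j^0$, $T_0x_i^0=\bar\lambda x_i^0$ and self-adjointness,
\[
\left\langle \frac{T_{tb}-T_0}{t}\,P_{tb}x_j^0,\,x_i^0\right\rangle=\frac{\lambda(tb)-\bar\lambda}{t}\,\langle P_{tb}x_j^0,x_i^0\rangle .
\]
Now let $t\to0$, using only norm continuity of $P_b$. The case $i=j$ shows that the difference quotient of $\lambda$ converges to $\langle T'(0)[b]x_j^0,x_j^0\rangle$ for every $j$, so the diagonal entries coincide. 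For $i\neq j$ the right-hand side tends to $0$.

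Finally, your closing claim that the eigenvalue-matrix route needs Rellich analyticity is inaccurate. With your own $M$ and $G$, the cluster eigenvalues of $T_{tb}$ are those of $G^{-1/2}MG^{-1/2}=\bar\lambda I+tA+o(t)$, where $A_{ij}=\langle T'(0)[b]x_j^0,x_i^0\rangle$. Weyl's inequality then gives first-order expansions under mere differentiability at $t=0$. If all these first-order terms coincide, the symmetric matrix $A$ must be scalar.
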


Transversality results and their application to elliptic partial differential equation constitute a beautiful and fascinating topic. For readers wishing to explore this subject more deeply, we suggest, alongside the aforementioned papers, the fundamental references \cite{quinn, Saut-Temam,Smale,u2} 

The paper is organized as follows. The proof of Theorem \ref{th:SD} is given in full details through sections \ref{Sec:frame} to \ref{teorema1}. In particular in Section \ref{Sec:frame} we give the variational setting for the proof, in Section 
 \ref{Sec:der} we collect some technical computation which are fundamental in the proof and in Section \ref{nosplitti} we give the no splitting condition which comes from the application of Theorem \ref{thm:astratto} to our framework and which allows us to conclude the proof.

 Since the technique applies, which some changes, to Problem (\ref{Slosh0}), the proof of Theorem \ref{th:SN} is given in Section \ref{sec:sloshsimp}, where only the difference with the previous proof are underlined.

 Finally, in the appendix, we sketch a different apporach to Theorem  \ref{th:SN} , which can be interesting since add a variation to the main strategy which can be used in other situations.
 
\section{Variational framework for Theorem  \ref{th:SD}.}\label{Sec:frame}
We work in the Hilbert space $\mathcal{H}(\Omega):=\{u\in H^1(\Omega)\ :\ u=0 \text{ on }W\}$ equipped with the bilinear form $$a_\Omega(u,v):=\int_\Omega \nabla u \nabla v dx.$$
Since each $u\in \mathcal{H}$ vanishes on the portion $W\subset \partial \Omega$ of  positive surface measure, $a_\Omega(\cdot,\cdot)$ is an inner product equivalent to the standard one on $H^1(\Omega)$. We denote by $\|u\|_\mathcal{H} =\|u\|=(a_\Omega(u,u))^{1/2}$ the induced norm.

We say that $u$ is an eigenfunction with eigenvalue $\lambda$, that is $u$ solves \eqref{Slosh}  in a weak sense if and only if 
$$
    a_{\Omega}(u,\varphi)=\lambda\int_{\partial \Omega} u\varphi\ d\sigma .
$$
for all $\varphi\in \mathcal{H}(\Omega)$.

For any $f\in \mathcal{H}$, we consider the continuous linear functional on $\mathcal{H}$ defined by
$$
L_f(\varphi):=\int_{\partial\Omega} f\varphi\  d\sigma=\int_{S} f\varphi\  d\sigma
$$
By Riesz representation theorem, for any $f\in \mathcal{H}$, there exists a unique $w\in\mathcal{H}$, such that 
\[
a_{\Omega}(w,\varphi)=L_f (\varphi) ,
\]for all $\varphi\in \mathcal{H}$
We then define $E_\Omega$  the operator, from $ \mathcal{H}$ in itself, which associate $f$ to $w$ as above. Equivalently, if $w=E_\Omega (f)$, then 
\begin{equation}\label{eq:EuSD}
 a_{\Omega}(w,\varphi)=a_{\Omega}(E_\Omega(f),\varphi)=\int_{\partial \Omega} f\varphi\  d\sigma 
\text{ for all }\varphi \in \mathcal{H},
\end{equation}
 that is $w$ is a weak solution of  
\begin{equation}\label{S-Dw}
   \left\{
    \begin{array}{cc}
         -\Delta w=0& \text{ on } \Omega\\
        \partial_\nu w=f & \text{ in } S\\
        w=0 & \text{ in }  W
    \end{array}
    \right.
\end{equation}
So, in light of (\ref{S-Dw}),  $(\lambda, u)$, is an eigenpair of \eqref{Slosh}, in a weak sense, if and only if $u=E_\Omega(\lambda u)$, or equivalently, if $E_\Omega (u)=\frac 1\lambda u$.  In fact, if  $u=E_\Omega(\lambda u)$ it holds
\begin{equation}\label{identita}
     a_{\Omega}(u,\varphi)=a_{\Omega}(E_\Omega(\lambda u),\varphi)=\lambda\int_{\partial \Omega} u\varphi\  d\sigma 
\text{ for all }\varphi \in \mathcal{H}.
\end{equation}

As $\{\lambda_k\}_k$ is a sequence of positive increasing eigenvalues for problem  \eqref{Slosh}, the reciprocals  $\{\mu_k:=1/\lambda_k\}_k$  form a decreasing sequence of positive eigenvalues of $E_\Omega $ decreasing to zero while $k$ increases. Also, any $\mu_k$ admits the following Min-Max characterization:
\begin{eqnarray*}  
\mu_1= \max_{a_\Omega(\varphi,\varphi)=1}\int_{S}\varphi^2
&\quad&
\mu_t= 
\inf_{[v_1,\dots,v_{t-i}]}
\sup_{
\begin{array}{c}
a_\Omega(\varphi,v_i)=0\\
a_\Omega(\varphi,\varphi)=1
\end{array}}\int_{S} \varphi^2,
\text{ where } \varphi\in\mathcal{H}.
\end{eqnarray*}
In this way we transformed the original problem in an eigenvalue problem for the operator $E_\Omega$. 

This entire construction carries over to any perturbed domain $\Omega_\psi$. However, in order to use Theorem \ref{thm:astratto} in the proof,  it is necessary to work within the fixed space $\mathcal{H}(\Omega)$. To that end we pull back any function defined on $\Omega_\psi$ on the unperturbed domain $\Omega$ via the diffeomorphism $(I+\psi)$.  

Specifically, if $x\in \Omega $ and $\xi=(I+\psi)(x)=x+\psi(x)$, given a function $\tilde u\in \mathcal{H}(\Omega_\psi):=\{\tilde u\in H^1(\Omega_\psi)\ :\ \tilde u=0 \text{ on }(I+\psi)W\}$, we define the corresponding function $u\in \mathcal{H}(\Omega)$ by
 $$
\tilde u(\xi)=\tilde u(x+\psi (x)):=u(x).
$$
This defines a map 
\begin{align*}
\gamma_\psi&:\mathcal{H}(\Omega_{\psi})\rightarrow \mathcal{H}(\Omega);\\
\gamma_\psi(\tilde{u})&:=u(x)=\tilde{u}(x+\psi(x))=\tilde u\circ (I+\psi)(x).
\end{align*}
If $\|\psi\|_{C^2}<1/2$, the map $I+\psi$ is invertible with inverse $(I+\psi)^{-1}=I+\chi $ and the following maps are continuous isomorphisms:
\begin{eqnarray}
 \label{gamma-psi}
\gamma_{\psi} : \mathcal{H}(\Omega_{\psi})\rightarrow \mathcal{H}(\Omega)
& &
\gamma_{\psi}^{-1}=\gamma_{\chi}:\mathcal{H}(\Omega)\rightarrow \mathcal{H}(\Omega_{\psi}).
\end{eqnarray}
Obviously, it is possible to consider the same pull back and push forward maps defined in the whole spaces $H^1(\Omega)$  and $H^1(\Omega_\psi)$. We will use the same notation $\gamma_{\psi} $ and $\gamma_{\chi}$ in both cases. 
Now, given $\tilde u,\tilde \varphi \in \mathcal{H}(\Omega_\psi)$ we define a bilinear form $\mathcal{A}_\psi$  on $\mathcal{H}(\Omega)$ by pulling back the bilinear form  $a_{\Omega_\psi} $ on $\mathcal{H} (\Omega_\psi)$ as follows:
\begin{equation}
    \label{defA}
a_{\Omega_\psi}(\tilde u,\tilde \varphi)=\int_{\Omega_\psi} \nabla \tilde u \nabla \tilde \varphi \ dx  =:\mathcal{A}_\psi (u,\varphi) 
\end{equation}
where $u=\gamma_{\psi}\tilde u$ and $\varphi=\gamma_{\psi} \tilde \varphi$ .   
Recalling that $E_{\Omega_\psi}$ is introduced in (\ref{eq:EuSD}), we define
 \begin{eqnarray}\label{Tpsi}
T_\psi:\mathcal{H}(\Omega)\rightarrow\mathcal{H}(\Omega);&\quad& T_\psi{u}:=\gamma_\psi E_{\Omega_\psi} \gamma_\psi^{-1}{u}.
 \end{eqnarray}
Notice that $T_0=E_\Omega$. 
Recalling that  $\tilde{u},\tilde{\varphi}\in \mathcal{H}(\Omega_\psi)$,  ${u},{\varphi}\in \mathcal{H}(\Omega)$, and in light of  (\ref{eq:EuSD}) we have
  \begin{multline}
       \label{eq:SDfond}
     \mathcal{A}_\psi (T_\psi{u},\varphi) =a_{\Omega_\psi}(E_{\Omega_\psi}(\tilde u),\tilde \varphi)=\int_{\partial {\Omega_\psi}}  \tilde u \tilde \varphi\  d\sigma 
     =\int_{\partial \Omega} u\varphi\ B_\psi d\sigma 
 \    \text { for all }\varphi \in \mathcal{H}(\Omega). 
 \end{multline}
Here  $B_\psi$  represents the Jacobian  of the boundary change of variables. This final identity plays a crucial role in the application of the transversality theorem, allowing us to compute the derivatives of $T_\psi$ with respect to $\psi$
 
\section{Computation of derivatives}\label{Sec:der}

In this section we collect a series of technical Lemma necessary to compute the derivatives of the integral terms appearing in (\ref{eq:SDfond}), which will be the fundamental tool for our result.

\begin{lemma}\label{lem:Astorto} Set $$
\mathcal{A}_\psi(u,\varphi):
=\int_{\Omega_\psi} \nabla \tilde u\nabla \tilde \varphi d\xi.$$ 
Then, the derivative of $\mathcal{A}_\psi$ with respect to $\psi$ at $\psi=0$ satisfies
$$
\begin{aligned}
    \mathcal{A}'_\psi(0)[\psi](u,\varphi)=
&\int_\Omega-\psi_t \frac{\partial}{\partial x_t} \nabla u\nabla \varphi + \psi_i\frac{\partial }{ \partial x_j}\left(\frac{\partial u}{\partial x_i}\frac{\partial \varphi}{\partial x_j}+
\frac{\partial u}{\partial x_j}\frac{\partial \varphi}{\partial x_i}
\right) dx\\
&+\int_{\partial\Omega}\psi_t \nu_t \nabla u\nabla \varphi -
\psi_i \nu_j\left(\frac{\partial u}{\partial x_i}\frac{\partial \varphi}{\partial x_j}+\frac{\partial u}{\partial x_j}\frac{\partial \varphi}{\partial x_i}
\right) d\sigma
\end{aligned} $$
\end{lemma}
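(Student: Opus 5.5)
We will compute the derivative by pulling the integral back to $\Omega$ through the change of variables $\xi=x+\psi(x)$, differentiating the resulting integrand in $\psi$ at $\psi=0$, and then integrating by parts to reach the stated form.

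\emph{Pull-back.} With $u=\tilde u\circ(I+\psi)$ the chain rule gives $\nabla u=(I+D\psi)^T(\nabla\tilde u)\circ(I+\psi)$, so $(\nabla \tilde u)\circ(I+\psi)=(I+D\psi)^{-T}\nabla u$, and $d\xi=\det(I+D\psi)\,dx$. Hence
$$\mathcal{A}_\psi(u,\varphi)=\int_\Omega (I+D\psi)^{-T}\nabla u\cdot (I+D\psi)^{-T}\nabla\varphi\,\det(I+D\psi)\,dx .$$
For $\|\psi\|<1/2$ the integrand is smooth in $D\psi$, so the map $\psi\mapsto\mathcal{A}_\psi(u,\varphi)$ is Fréchet differentiable on $\mathcal{D}$. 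At $\psi=0$ we use the standard expansions $(I+D\psi)^{-1}=I-D\psi+O(\|\psi\|^2)$ and $\det(I+D\psi)=1+\div\psi+O(\|\psi\|^2)$.

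\emph{Derivative.} Collecting the linear terms gives
$$\mathcal{A}'_\psi(0)[\psi](u,\varphi)=\int_\Omega \div\psi\,\nabla u\nabla\varphi-\frac{\partial\psi_i}{\partial x_j}\Big(\frac{\partial u}{\partial x_i}\frac{\partial\varphi}{\partial x_j}+\frac{\partial u}{\partial x_j}\frac{\partial\varphi}{\partial x_i}\Big)dx .$$
Here the sign and the index placement of the transpose must be fixed carefully, since $D\psi^T$ versus $D\psi$ decides which index is contracted. Because the bracket is symmetric in $(i,j)$, either convention produces the same symmetrized expression, which is the check I would use.

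\emph{Integration by parts.} We move the derivatives off $\psi$ with the divergence theorem. For the first term, $\int_\Omega \partial_t\psi_t\,\nabla u\nabla\varphi=-\int_\Omega\psi_t\,\partial_t(\nabla u\nabla\varphi)+\int_{\partial\Omega}\psi_t\nu_t\nabla u\nabla\varphi$. For the second term, integrating $\partial_j\psi_i$ by parts gives $+\int_\Omega\psi_i\partial_j(\cdots)-\int_{\partial\Omega}\psi_i\nu_j(\cdots)$. Together these yield exactly the displayed formula.

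\emph{Main obstacle.} The hard step is justifying this integration by parts, because it needs second derivatives of $u,\varphi$ in $L^1(\Omega)$ with traces of $\nabla u$ on $\partial\Omega$, which fails for general $u,\varphi\in\mathcal{H}$. I would first establish the formula for $u,\varphi$ smooth up to the boundary, say in $H^2(\Omega)$, and read it as an identity in that class. This class contains the case of interest: eigenfunctions of \eqref{Slosh} are harmonic with $C^{1,1}$ boundary and mixed data. Elliptic regularity gives $H^2$ away from $\overline S\cap\overline W$, and in the application the perturbations vanish near the interface. So I would state the lemma for $u,\varphi$ regular enough, for instance $H^2$ near the support of $\psi$, and note that the pre-integration-by-parts form holds for all $u,\varphi\in\mathcal{H}$. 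That form is the real derivative, and the displayed formula follows by density wherever the boundary terms make sense.
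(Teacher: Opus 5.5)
Your proposal is correct and follows essentially the same route as the paper: pull back to $\Omega$ via $\xi=x+\psi(x)$, linearize $(I+D\psi)^{-T}$ and $\det(I+D\psi)$ to get $\int_\Omega \div\psi\,\nabla u\nabla\varphi-\frac{\partial\psi_i}{\partial x_j}\bigl(\frac{\partial u}{\partial x_i}\frac{\partial\varphi}{\partial x_j}+\frac{\partial u}{\partial x_j}\frac{\partial\varphi}{\partial x_i}\bigr)dx$, then integrate by parts. Working directly with the matrix $(I+D\psi)^{-T}$ is cleaner than the paper's formal series for $(I+\psi)^{-1}$. Your caveat, that the final integration by parts needs $u,\varphi$ to be $H^2$ near the support of $\psi$ (true for eigenfunctions away from $S\cap W$), is a point the paper leaves implicit.
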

\begin{proof}
Observe that
\begin{displaymath}
    \tilde u(\xi)=(\gamma_\psi^{-1} u)(\xi)=(\gamma_\chi u)(\xi) =u(\xi +\chi \xi), 
\end{displaymath}
so 
$$
\frac{\partial }{\partial \xi_j}\tilde u=\frac{\partial }{\partial \xi_j}u(\xi +\chi \xi)=
\frac{\partial u}{\partial x_i}
\frac{\partial (\xi +\chi \xi)_i}{ \partial \xi_j}
=\frac{\partial u}{\partial x_j}+
\frac{\partial u}{\partial x_i}\frac{\partial \chi_i}{ \partial \xi_j}.
$$
At this point (with the Einstein convention on repeated indexes)
\begin{align*}
\mathcal{A}_\psi(u,\varphi):
=&
\int_{\Omega_\psi} \frac{\partial \tilde u }{\partial \xi_j}\frac{\partial \tilde \varphi }{\partial \xi_j}d\xi\\
=&\int_{\Omega_\psi} 
\left(
\frac{\partial u}{\partial x_j}+
\frac{\partial u}{\partial x_i}\frac{\partial \chi_i}{ \partial \xi_j}
\right)
\left(
\frac{\partial \varphi}{\partial x_j}+
\frac{\partial \varphi}{\partial x_s}\frac{\partial \chi_s}{ \partial \xi_j}
\right)\\
=&\int_\Omega \nabla u \nabla \varphi |J_\psi| dx+
\int_\Omega \frac{\partial u}{\partial x_j}
\frac{\partial \varphi}{\partial x_s}\frac{\partial \chi_s}{ \partial \xi_j}
|J_\psi| dx
+\int_\Omega \frac{\partial u}{\partial x_i}\frac{\partial \chi_i}{ \partial \xi_j}\frac{\partial \varphi}{\partial x_j}|J_\psi| dx\\
&+\int_\Omega \frac{\partial u}{\partial x_i}\frac{\partial \chi_i}{ \partial \xi_j}
\frac{\partial \varphi}{\partial x_s}\frac{\partial \chi_s}{ \partial \xi_j}
|J_\psi| dx\\
=&\int_\Omega \nabla u \nabla \varphi |J_\psi| dx+
\int_\Omega 
\left(\frac{\partial u}{\partial x_i}\frac{\partial \varphi}{\partial x_j}+
\frac{\partial u}{\partial x_j}\frac{\partial \varphi}{\partial x_i}
\right)\frac{\partial \chi_i}{ \partial \xi_j}
|J_\psi| dx\\
&+\int_\Omega \frac{\partial u}{\partial x_i}\frac{\partial \chi_i}{ \partial \xi_j}
\frac{\partial \varphi}{\partial x_s}\frac{\partial \chi_s}{ \partial \xi_j}
|J_\psi| dx
\end{align*}
Since $\|\psi\|_{C^2}<1/2$, we can express $I+\chi=(I+\psi)^{-1}=\sum_{t=0}^\infty (-1)^t(\psi)^t$, then, taking the linear part of the sum, we have the first derivatives of $\chi$, that is $\frac {\partial \chi_i}{\partial \xi_j} =-\frac {\partial \psi_i}{\partial x_j}$.
Also, by elementary computations
$$
|\mathrm{det}J_{I+\varepsilon \psi}|=1+\varepsilon \div\psi +o(\varepsilon)
$$
so $J_\psi'(0)[\psi])=\div\psi$ . With these equalities in mind, we can compute the derivatives of $\mathcal{A}_\psi(u,\varphi)$. In fact we have
\begin{eqnarray*}
    \mathcal{A}'_\psi(0)[\psi](u,\varphi)=&\int_\Omega\div\psi \nabla u\nabla \varphi - \left(\frac{\partial u}{\partial x_i}\frac{\partial \varphi}{\partial x_j}+
\frac{\partial u}{\partial x_j}\frac{\partial \varphi}{\partial x_i}
\right)\frac{\partial \psi_i}{ \partial x_j}dx
\end{eqnarray*} 
and integration by parts yields the desired result.
\end{proof}
\begin{remark}
    Notice that, when $\varphi$ is compactly supported in $\Omega$, then we simply have 
$$
\mathcal{A}'_\psi(0)[\psi](u,\varphi)=
\int_\Omega-\psi_t \frac{\partial}{\partial x_t} \nabla u\nabla \varphi + \psi_i\frac{\partial }{ \partial x_j}\left(\frac{\partial u}{\partial x_i}\frac{\partial \varphi}{\partial x_j}+
\frac{\partial u}{\partial x_j}\frac{\partial \varphi}{\partial x_i}
\right) dx
.$$
    
\end{remark}
\begin{lemma}\label{lem:B}
    Let $B_\psi$ denote the Jacobian of change of variables on $\partial \Omega$ induced by $\psi$. We have 
$$
\left. \frac{\partial}{\partial \psi}\int_{\partial \Omega} u\varphi\ B_\psi d\sigma\right|_{\psi=0}=\int_{\partial \Omega} u\varphi 
\left(
\div \psi -\sum_{r=1}^n 
\partial_\nu \psi_r \nu_r\right)d\sigma=
\left. \frac{\partial}{\partial \psi}\int_{\partial \Omega_\psi} \tilde u
\tilde \varphi\ d\sigma\right|_{\psi=0}
 $$
and
$$
\left. \frac{\partial}{\partial \psi}\int_{\partial \Omega}d(x)  u\varphi\ B_\psi d\sigma\right|_{\psi=0}=\int_{\partial \Omega} d(x)u\varphi
     \left(\div \psi -\sum_{r=1}^n \partial_\nu \psi_r \nu_r\right)d\sigma    
$$where $d:=1_S$
\end{lemma}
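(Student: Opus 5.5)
The plan is to compute the first variation of the surface Jacobian $B_\psi$ of the map $I+\psi$ restricted to $\partial\Omega$, and then differentiate under the integral sign. The integrand $u\varphi$ on $\partial\Omega$ does not depend on $\psi$, since $u=\gamma_\psi\tilde u$ is fixed in $\mathcal{H}(\Omega)$. Similarly $d=1_S$ is independent of $\psi$, because $S_\psi=(I+\psi)S$ pulls back exactly to $S$. So everything reduces to the identity
$$B'_\psi(0)[\psi]=\div\psi-\sum_{r=1}^n\partial_\nu\psi_r\,\nu_r ,$$
that is, the tangential divergence $\div_{\partial\Omega}\psi$. The middle equality of the statement then follows from the change of variables $\xi=x+\psi(x)$ on the boundary:
$$\int_{\partial\Omega_\psi}\tilde u\tilde\varphi\,d\sigma=\int_{\partial\Omega}u\varphi\,B_\psi\,d\sigma .$$
This was already used in \eqref{eq:SDfond}.

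To compute $B_\psi$, I would work locally. Fix $x_0\in\partial\Omega$ and an orthonormal basis $\tau_1,\dots,\tau_{n-1}$ of $T_{x_0}\partial\Omega$, completed by $\nu$. The boundary Jacobian is
$$B_\psi=\sqrt{\det G_\psi},\qquad (G_\psi)_{\alpha\beta}=\langle (I+D\psi)\tau_\alpha,(I+D\psi)\tau_\beta\rangle .$$
Equivalently, by Nanson's formula, $B_\psi=|\det(I+D\psi)|\,|(I+D\psi)^{-T}\nu|$.

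Expanding to first order in $\psi$ gives
$$G_\psi=I+\bigl(\langle D\psi\,\tau_\alpha,\tau_\beta\rangle+\langle\tau_\alpha,D\psi\,\tau_\beta\rangle\bigr)+o(\|\psi\|),$$
so $\sqrt{\det G_\psi}=1+\sum_\alpha\langle D\psi\,\tau_\alpha,\tau_\alpha\rangle+o(\|\psi\|)$. Since $\{\tau_\alpha,\nu\}$ is orthonormal, $\mathrm{tr}\,D\psi=\sum_\alpha\langle D\psi\,\tau_\alpha,\tau_\alpha\rangle+\langle D\psi\,\nu,\nu\rangle$. Hence
$$\sum_\alpha\langle D\psi\,\tau_\alpha,\tau_\alpha\rangle=\div\psi-\langle\partial_\nu\psi,\nu\rangle=\div\psi-\sum_r\partial_\nu\psi_r\nu_r .$$
The Nanson route gives the same result as a check. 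It uses $|\det(I+\varepsilon D\psi)|=1+\varepsilon\div\psi+o(\varepsilon)$, as in the proof of Lemma \ref{lem:Astorto}, together with $|(I+\varepsilon D\psi)^{-T}\nu|=1-\varepsilon\langle D\psi^T\nu,\nu\rangle+o(\varepsilon)$, and $\langle D\psi^T\nu,\nu\rangle=\langle\partial_\nu\psi,\nu\rangle$.

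Finally, the expansion is uniform in $x\in\partial\Omega$, because $\psi\in\mathcal{D}$ has bounded derivatives and $\partial\Omega$ is $C^{1,1}$, so $\nu$ is Lipschitz and bounded. Therefore $B_\psi-1-\div_{\partial\Omega}\psi=o(\|\psi\|)$ in $L^\infty(\partial\Omega)$. Since $u\varphi\in L^1(\partial\Omega)$ by the trace theorem, the map $\psi\mapsto\int_{\partial\Omega}d\,u\varphi\,B_\psi\,d\sigma$ is Fréchet differentiable at $0$ with the stated derivative, both with $d\equiv1$ and with $d=1_S$.

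The only point requiring care is the uniform control of the remainder in the expansion of $\sqrt{\det G_\psi}$, which justifies differentiating under the integral. This follows from smoothness of $\det$ and $\sqrt{\cdot}$ near the identity, given $\|\psi\|_{C^1}$ small.
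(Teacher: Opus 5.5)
Your proof is correct, but it follows a different route from the paper.

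The paper never computes $B_\psi$ explicitly. It extends $\nu_{\Omega_\psi}$ into $\Omega_\psi$ and uses the divergence theorem to write $\int_{\partial\Omega_\psi}\tilde u\tilde\varphi\,d\sigma=\int_{\Omega_\psi}\div(\tilde u\tilde\varphi\,\nu_{\Omega_\psi})\,d\xi$. It pulls this volume integral back to $\Omega$ with the first-order facts already used in Lemma \ref{lem:Astorto} (volume Jacobian $1+\div\psi$, $D\chi\approx-D\psi$), then differentiates and integrates by parts. The interior terms with second derivatives of $\psi$ cancel, leaving only $\int_{\partial\Omega}u\varphi(\div\psi-\nu_i\nu_t\partial_i\psi_t)\,d\sigma$.

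You instead expand the surface Jacobian pointwise (Gram determinant, with Nanson's formula as a check) and obtain $B_\psi=1+\div_{\partial\Omega}\psi+o(\|\psi\|)$ uniformly on $\partial\Omega$.

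What the paper's route buys is reuse of the volume calculus of Lemma \ref{lem:Astorto}, with no tangent-frame geometry. Its costs are that it uses $\psi\in C^2$ and an extension of the normal. It also tacitly treats $\gamma_\psi\tilde F$ as independent of $\psi$, although this field contains $N_\psi:=\nu_{\Omega_\psi}\circ(I+\psi)$. The omitted term $\int_{\partial\Omega}u\varphi\,\nu\cdot N_\psi'(0)[\psi]\,d\sigma$ vanishes only because $|N_\psi|=1$ on $\partial\Omega$ forces $\nu\cdot N_\psi'(0)[\psi]=0$ there. Finally, the divergence argument is carried out only for the whole boundary, and the weighted identity with $d=1_S$ is just asserted.

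Your pointwise expansion needs only $C^1$ control of $\psi$ and gives Fréchet differentiability with an explicit $L^\infty$ remainder. It also covers any bounded weight, in particular $d=1_S$, with no extra work.
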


\begin{proof}
Given $\tilde u, \tilde \varphi\in H^1(\Omega_\psi)$, and set $\nu_{\Omega_\psi}$ the outward normal to $\partial \Omega_\psi$, we extend $\nu_{\Omega_\psi}$ smoothly to the whole $\Omega_\psi$, and we define the vectorial function $\tilde F:\Omega_\psi\rightarrow \mathbb{R}^n$ as $\tilde F:=\tilde u\tilde \varphi \nu_{\Omega_\psi}$.  Now, by the divergence theorem we have the identity
\begin{equation}\label{eq:derbordo0}
    \int_{\Omega_\psi}\frac{\partial}{\partial \xi_i}\tilde F_i d\xi=\int_{\partial \Omega_\psi}\tilde F_i(\nu_{\Omega_\psi})_i d\sigma =
    \int_{\partial \Omega_\psi}\tilde u(\xi) \tilde \varphi (\xi) d\sigma 
\end{equation}
By the usual change of variables we have $$\int_{\partial \Omega_\psi}\tilde u(\xi) \tilde \varphi (\xi) d\sigma= \int_{\partial \Omega} u(x)  \varphi (x) B_\psi d\sigma $$
Thus, differentiating in the $\psi$ variable at $\psi=0$ by  (\ref{eq:derbordo0})  we have 
\begin{equation}\label{eq:derbordo1}
    \left. \frac{\partial}{\partial \psi}\int_{\partial \Omega} u\varphi\ B_\psi d\sigma\right|_{\psi=0}=\left. \frac{\partial}{\partial \psi}
      \int_{\Omega_\psi}\frac{\partial}{\partial \xi_i}\tilde F_i d\xi
    \right|_{\psi=0}.
\end{equation}
Now, calling $F=\gamma_\psi \tilde F$, so $\tilde F (\xi)= \gamma_\psi^{-1} F(\xi)=\gamma_\chi F(\xi)=F(\xi +\chi \xi)$ and proceeding as in Lemma \ref{lem:Astorto}, we obtain 
\begin{equation}\label{eq:derbordo2}
 \int_{\Omega_\psi}\frac{\partial}{\partial \xi_i}\tilde F_i d\xi=
  \int_{\Omega}
 \left[
  \frac{\partial F_i}{\partial x_i}  -\frac{\partial  F_i }{\partial x_t}\frac{\partial \psi_t}{\partial x_i}
  \right] J_\psi dx
\end{equation}
Combining (\ref{eq:derbordo1}) and (\ref{eq:derbordo2}), and integrating by parts we get
$$\begin{aligned}
     \left. \frac{\partial}{\partial \psi}\int_{\partial \Omega} u\varphi\ B_\psi d\sigma\right|_{\psi=0}&=
      \left. \frac{\partial}{\partial \psi}
       \int_{\Omega}
       \left(\frac{\partial F_i}{\partial x_i}  -\frac{\partial  F_i }{\partial x_t}\frac{\partial \psi_t}{\partial x_i}\right) J_\psi dx\textbf{}  
        \right|_{\psi=0}\\ &
        =\int_\Omega \frac{\partial F_i}{\partial x_i}\div \psi dx-
        \int_\Omega \frac{\partial  F_i }{\partial x_t}\frac{\partial \psi_t}{\partial x_i}dx\\ &
        =
        -\int_\Omega F_i   \frac{\partial^2 \psi_t}{\partial x_i\partial x_t}dx
        +\int_{\partial\Omega} F_i\nu_i \div \psi d\sigma\\ &\quad
        +\int_\Omega  F_i \frac{\partial^2 \psi_t}{ \partial x_t\partial x_i}dx
      -  \int_{\partial \Omega } F_i \nu_t\frac{\partial \psi_t}{\partial x_i}d\sigma\\ &
      =\int_{\partial\Omega} u\varphi \div \psi d\sigma
      -  \int_{\partial \Omega } u\varphi \nu_i \nu_t\frac{\partial \psi_t}{\partial x_i}d\sigma
\end{aligned}$$
and, since $\nu_i \frac{\partial \psi_t}{\partial x_i}=\partial_\nu \psi_t$, we get the proof.

\end{proof} 
Finally, similarly to the proof of Lemma \ref{lem:Astorto}, we get the last result of this section. 
\begin{lemma}\label{lem:uv}
 We have 
$$\begin{aligned}
\left. \frac{\partial}{\partial \psi}\int_{ \Omega_\psi} \tilde u\tilde \varphi d\xi\right|_{\psi=0}&=
\left. \frac{\partial}{\partial \psi}\int_{ \Omega} u\varphi |J_\psi|dx\right|_{\psi=0}
\\ &=\int_\Omega-\psi_t \frac{\partial}{\partial x_t}  u \varphi  dx
+\int_{\partial\Omega}\psi_t \nu_t  u \varphi d\sigma
 \end{aligned}$$
\end{lemma}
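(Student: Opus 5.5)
The plan is to repeat the first step of the proof of Lemma \ref{lem:Astorto}, which is simpler here because no gradients appear. First I change variables $\xi=(I+\psi)(x)$ in $\int_{\Omega_\psi}\tilde u\tilde\varphi\,d\xi$. By definition of the pull back $\gamma_\psi$ we have $\tilde u(x+\psi(x))=u(x)$ and $\tilde\varphi(x+\psi(x))=\varphi(x)$. Hence
\[
\int_{\Omega_\psi}\tilde u\tilde\varphi\,d\xi=\int_\Omega u\varphi\,|J_\psi|\,dx ,
\]
which is the first equality in the statement. The key point is that, unlike in Lemma \ref{lem:Astorto}, the integrand $u\varphi$ on the fixed domain does not depend on $\psi$. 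So no derivative of $\chi=(I+\psi)^{-1}-I$ enters, and the whole $\psi$-dependence sits in the Jacobian.

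Next I differentiate at $\psi=0$ in the direction $\psi$. I use the expansion already recorded in the proof of Lemma \ref{lem:Astorto}, namely $|\det J_{I+\varepsilon\psi}|=1+\varepsilon\,\div\psi+o(\varepsilon)$, so that $J'_\psi(0)[\psi]=\div\psi$. To pass the derivative under the integral sign, I note that $u\varphi\in L^1(\Omega)$ for $u,\varphi\in H^1(\Omega)$. Moreover, the remainder in the expansion of $|J_\psi|$ is $o(\|\psi\|_{C^1})$ uniformly in $x$. This gives
\[
\left.\frac{\partial}{\partial\psi}\int_\Omega u\varphi|J_\psi|\,dx\right|_{\psi=0}=\int_\Omega u\varphi\,\div\psi\,dx .
\]

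Finally I integrate by parts, writing $\div\psi=\partial_t\psi_t$:
\[
\int_\Omega u\varphi\,\partial_t\psi_t\,dx=-\int_\Omega\psi_t\,\frac{\partial}{\partial x_t}(u\varphi)\,dx+\int_{\partial\Omega}\psi_t\nu_t\,u\varphi\,d\sigma ,
\]
which is the claimed formula. Here the factor $\frac{\partial}{\partial x_t}u\varphi$ is read as $\frac{\partial}{\partial x_t}(u\varphi)$. This step is justified because $u\varphi\in W^{1,1}(\Omega)$ with trace $u|_{\partial\Omega}\varphi|_{\partial\Omega}$, $\psi\in C^2$, and $\partial\Omega$ is $C^{1,1}$. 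If needed, I would first approximate $u,\varphi$ by smooth functions in $H^1$ and pass to the limit using continuity of the trace into $L^2(\partial\Omega)$.

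The only mildly delicate point is justifying the differentiation (Fréchet, uniformly in $u,\varphi$ on bounded sets) and the integration by parts at $H^1$ regularity. Both reduce to the uniform expansion of the Jacobian and to density, so I expect no real obstacle.
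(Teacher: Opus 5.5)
Your proposal is correct and is essentially the paper's argument. The paper only says the lemma follows ``similarly to the proof of Lemma \ref{lem:Astorto}'', namely change variables, use $J'_\psi(0)[\psi]=\div\psi$, and integrate by parts, which is what you do, with the simplification you note that no derivatives of $\chi$ appear, and you read $\frac{\partial}{\partial x_t}u\varphi$ correctly as $\frac{\partial}{\partial x_t}(u\varphi)$.
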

To streamline notation, define:  
$$\begin{aligned}
    L_1(u,\varphi)&:=\int_\Omega-\psi_t \frac{\partial}{\partial x_t} \left(\nabla u\nabla \varphi\right) + \psi_i\frac{\partial }{ \partial x_j}\left(\frac{\partial u}{\partial x_i}\frac{\partial \varphi}{\partial x_j}+
\frac{\partial u}{\partial x_j}\frac{\partial \varphi}{\partial x_i}
\right) dx\\
  I_1(u,\varphi)&:=\int_{\partial\Omega}\psi_t \nu_t \nabla u\nabla \varphi -
\psi_i \nu_j\left(\frac{\partial u}{\partial x_i}\frac{\partial \varphi}{\partial x_j}+\frac{\partial u}{\partial x_j}\frac{\partial \varphi}{\partial x_i}
\right) d\sigma\\
  I_2( u,\varphi)&:=\int_{\partial \Omega} u\varphi \left(\div \psi -\sum_{r=1}^n \partial_\nu \psi_r \nu_r\right) d\sigma\\
   I_3( u,\varphi)&:= 
\int_\Omega-\psi_t \frac{\partial}{\partial x_t}  u \varphi  dx
+\int_{\partial\Omega}\psi_t \nu_t  u \varphi d\sigma
   \\
      G(u,\varphi)&:=\int_{ S} u\varphi \left(\div \psi -\sum_{r=1}^n \partial_\nu \psi_r \nu_r\right) d\sigma
 \end{aligned}$$

\section{No-splitting condition}
\label{nosplitti}
We aim to reformulate the abstract result, 
Theorem \ref{thm:astratto}, and in particular the no splitting condition (\ref{eq:spezzamentoastratto}), in our setting. 
Assume that $(e,\lambda)$  is an eigenpair for problem \eqref{Slosh}, and that the eigenvalue $\lambda$ has multiplicity $m>1$. Then $\mu=\frac1\lambda $ is an eigenvalue of $T_0$ (defined in \ref{Tpsi}) with multiplicity $m>1$. Let $\{e_1, \dots,e_m \}$ be an orthonormal base of the corresponding eigenspace, and suppose that any domain perturbation  $\psi$  preserves the multiplicity of $\mu$. Then, in light of  (\ref{eq:spezzamentoastratto}) we have to compute $   a_\Omega  (T_\psi '(0)[\psi] u,\varphi)$ and verify whether, for every such $\psi$, there exists $\rho=\rho(\psi)\in \mathbb{R}$ such that
$$
       a_\Omega  (T_\psi '(0)[\psi] e_r,e_s)=  \rho(\psi)\delta_{rs}.
$$

To compute the derivative $ a_\Omega  (T_\psi '(0)[\psi] e_r,e_s)$, we consider equation (\ref{eq:SDfond}), and, by virtue of the result of Section \ref{Sec:der}, we differentiate it with respect to $\psi$, obtaining the following lemma
\begin{lemma}\label{lem:diff}
The operators  $\mathcal{A}_\psi$ and $T_\psi$, respectively defined in (\ref{defA}) and (\ref{Tpsi})are Frechét differentiable 
with respect to $\psi$ at point $0$, and it holds
\begin{equation*}
    \mathcal{A}_\psi'(0)[\psi] (T_0 u,\varphi)+
      \mathcal{A}_0 (T_\psi '(0)[\psi] u,\varphi)=\left. \frac{\partial}{\partial \psi}\int_{S}  u\varphi\ B_\psi d\sigma\right|_{\psi=0}.
\end{equation*} 
\end{lemma}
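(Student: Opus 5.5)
The plan is to differentiate the identity \eqref{eq:SDfond} in $\psi$ at $\psi=0$, after first proving that both sides are Fréchet differentiable.

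\textbf{Step 1: differentiability of $\psi\mapsto\mathcal{A}_\psi$.} In the proof of Lemma \ref{lem:Astorto} we wrote
$$\mathcal{A}_\psi(u,\varphi)=\int_\Omega M_\psi(x)\nabla u\cdot\nabla\varphi\,dx,\qquad M_\psi=(I+D\psi)^{-1}(I+D\psi)^{-T}|\det(I+D\psi)|.$$
Each entry of $M_\psi$ is obtained by composing an analytic map (matrix inversion and determinant near the identity) with $\psi\mapsto D\psi\in C^1$. Hence $\psi\mapsto M_\psi\in L^\infty(\Omega)$ is $C^1$ near $0$, with $\|D\psi\|_\infty<1/2$. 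Consequently $\psi\mapsto\mathcal{A}_\psi$ is Fréchet differentiable as a map into bounded bilinear forms on $\mathcal{H}(\Omega)$, and its derivative is the one computed in Lemma \ref{lem:Astorto}.

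The same reasoning applies to the boundary side. The surface Jacobian $B_\psi=|\det(I+D\psi)|\,|(I+D\psi)^{-T}\nu|$ depends smoothly on $D\psi$ restricted to $\partial\Omega$. By the trace theorem, $\psi\mapsto\int_S u\varphi B_\psi\,d\sigma$ is therefore differentiable as a bilinear-form-valued map, with derivative given by Lemma \ref{lem:B} restricted to $S$ (the $d=1_S$ case).

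\textbf{Step 2: differentiability of $T_\psi$.} For each $\psi$, let $A_\psi$ be the operator on $\mathcal{H}(\Omega)$ with $\mathcal{A}_\psi(u,\varphi)=a_\Omega(A_\psi u,\varphi)$, and let $K_\psi$ be the operator with $\int_S u\varphi B_\psi\,d\sigma=a_\Omega(K_\psi u,\varphi)$. Identity \eqref{eq:SDfond} then reads $A_\psi T_\psi=K_\psi$.

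Since $A_0=I$ and $\psi\mapsto A_\psi$ is $C^1$, $A_\psi$ is invertible for small $\psi$ and $\psi\mapsto A_\psi^{-1}$ is differentiable (Neumann series). So $T_\psi=A_\psi^{-1}K_\psi$ is Fréchet differentiable at $0$. This also confirms the differentiability hypothesis of Theorem \ref{thm:astratto}.

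\textbf{Step 3: the identity.} Apply the product rule to $\mathcal{A}_\psi(T_\psi u,\varphi)$:
$$\mathcal{A}'_\psi(0)[\psi](T_0u,\varphi)+\mathcal{A}_0(T'_\psi(0)[\psi]u,\varphi).$$
This equals the derivative of the right-hand side of \eqref{eq:SDfond}. Since $u,\varphi\in\mathcal{H}(\Omega)$ vanish on $W$, that right-hand side equals $\int_S u\varphi B_\psi\,d\sigma$, which gives the stated formula.

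\textbf{Main obstacle.} The hardest part is the uniform control in Step 1. The maps must be differentiable in operator norm, not merely pointwise. This follows because the remainders of $M_\psi$ and $B_\psi$ are $o(\|\psi\|)$ in $L^\infty$, so they are controlled uniformly over $\|u\|,\|\varphi\|\le1$.

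A second point to check is that the pullback $\gamma_\psi$ preserves the Dirichlet condition on $W$, so that $\mathcal{H}(\Omega)$ really is a fixed space.
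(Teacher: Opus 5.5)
Your proposal is correct and follows the paper's route: the identity comes, as in the paper, from differentiating \eqref{eq:SDfond} at $\psi=0$ with the product rule, with the boundary integral reduced to $S$ because $u,\varphi$ vanish on $W$. The Fr\'echet differentiability, which the paper only asserts and defers to \cite{fgmp,MiSNS,mi2}, you justify properly through the $L^\infty$-smooth dependence of $M_\psi$ and $B_\psi$ on $D\psi$ and the operator identity $A_\psi T_\psi=K_\psi$, with $A_0=I$ inverted by a Neumann series.
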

At this point, recalling that $\mathcal{A}_0=a_\Omega$, by the previous lemma we have
\begin{equation}\label{cond1SD}\begin{aligned}
    a_\Omega  (T_\psi '(0)[\psi] u,\varphi)&=
    \left. \frac{\partial}{\partial \psi}\int_{S}  u\varphi\ B_\psi d\sigma\right|_{\psi=0}-
    \mathcal{A}_\psi'(0)[\psi] (T_0 u,\varphi)\\ &=
    -L_1(T_0u, \varphi)-
I_1( T_0u, \varphi)
+G(u,\varphi).
\end{aligned}\end{equation}

Having in mind (\ref{cond1SD}), and using that $T_0e_r=\mu e_r$ for all $r=1,\dots,m$, we obtain
\begin{equation}\label{cond2SD}
    -L_1(\mu e_r,e_s)-
I_1(\mu e_r,e_s)
+G( e_r,e_s)
=\rho\delta_{rs}.
\end{equation}
Now, consider a perturbation $\psi$ compactly supported in $\Omega$, so that $\psi\equiv0$ on the boundary.  In this case both $I_1$ and $G$ vanish, and condition (\ref{cond2SD}) reads as

$$\begin{aligned}
    L_1( \mu e_r,e_s)&=
  \mu \int_\Omega-\psi_i \frac{\partial}{\partial x_i} \nabla e_r\nabla e_s + \psi_i\frac{\partial }{ \partial x_j}\left(\frac{\partial e_r}{\partial x_i}\frac{\partial e_s}{\partial x_j}+
\frac{\partial e_r}{\partial x_j}\frac{\partial e_s}{\partial x_i}
\right) dx
\\ &=-\mu \delta_{rs}
\end{aligned}$$
Since this holds for arbitrary $\psi$, and  we may choose $\psi$ such that only one component $\psi_i$  is nonzero  at a time, then, for any $i$,  if $r\neq s$
\begin{equation*}
    \frac{\partial}{\partial x_i} \nabla e_r\nabla e_s +\sum_j\frac{\partial }{ \partial x_j}\left(\frac{\partial e_r}{\partial x_i}\frac{\partial e_s}{\partial x_j}+
\frac{\partial e_r}{\partial x_j}\frac{\partial e_s}{\partial x_i}
\right)=0
\end{equation*}
 almost everywhere on $\Omega$ and, when $r=s$, 
 \begin{equation*}
     \frac{\partial}{\partial x_i} |\nabla e_1|^2 +2\sum_j\frac{\partial }{ \partial x_j}\left(\frac{\partial e_1}{\partial x_i}\frac{\partial e_1}{\partial x_j}
\right)=\cdots=
\frac{\partial}{\partial x_i} |\nabla e_m|^2 +2\sum_j\frac{\partial }{ \partial x_j}\left(\frac{\partial e_m}{\partial x_i}\frac{\partial e_m}{\partial x_j}
\right)
 \end{equation*}
 That is means that, for any $\psi$ (not necessarily compactly supported in $\Omega$),  then $L_1(\mu e_r,e_s)=C(\psi)\delta_{rs}$, and   this term may be absorbed into the right-hand side of equation (\ref{cond2SD}). So the {\em no-splitting} condition becomes
 \begin{equation}\label{nosplitSD}
    -I_1(\mu e_r,e_s)
+G( e_r,e_s)=C(\psi)\delta_{rs}.
\end{equation}

\begin{remark}
To analyze the no-splitting condition \eqref{nosplitSD}, it is useful to recall that if an eigenfunction $e$ and its normal derivative $\partial_\nu e$ vanish in a neighborhood of a boundary point $\xi$ (where $\xi \notin \Gamma := S \cap W$), then $e$ must be identically zero.
Indeed, if $\Omega$ is a $C^{1,1}$ domain, the eigenfunction $e$ belongs to $H^2(D)$ for any subdomain $D \subset \Omega$ whose boundary is entirely contained within either $S$ or $W$ (see, e.g., \cite[Theorem 2.4.2.7]{grisvard}). It is well known that singular behavior occurs only near the points of the interface $S \cap W$ (see \cite{savare}). Furthermore, by the uniqueness of the Cauchy Problem as stated in \cite[p. 83]{henry}, we can deduce that $e$ vanishes identically in a neighborhood of $\xi$. Finally, by the Unique Continuation Principle, it follows that $e$ vanishes everywhere in $\Omega$.
\end{remark}

\subsection{Perturbations supported in $W$}\label{SDpertW}

If $(I+\psi)|_S=I|_S$, that is the support of  $\psi\cap\partial \Omega$ is contained in $W$,  then 
$G(e_r,e_s)\equiv 0$ and the boundary integrals are restricted to $W$ . So the no splitting condition (\ref{nosplitSD}) now becomes
  \begin{equation}\label{WSD}
-\mu \int_{W}\psi_t \nu_t \nabla e_r\nabla e_sd\sigma +\mu \int_{W}
\psi_i \nu_j\left(\frac{\partial e_r}{\partial x_i}\frac{\partial e_s}{\partial x_j}+\frac{\partial e_r}{\partial x_j}\frac{\partial e_s}{\partial x_i}
\right) d\sigma=C(\psi)\delta_{rs}.
\end{equation}
On $W$, since $e_r\equiv e_s\equiv0$ , it follows that $ \nabla e_r \nabla e_s=\frac{\partial e_r}{\partial \nu }\frac{\partial e_s}{\partial \nu }$. Furthermore, $\nu_j\frac{\partial e_r}{\partial x_j}=\frac{\partial e_r}{\partial\nu}$, 
and the same holds for $e_s$. Let us now choose $\psi=\alpha(x)\nu(t)$, where $\alpha(x)$ is an arbitrary smooth real function compactly supported in $W$. Thus  (\ref{WSD}) becomes
$$\begin{aligned}
-\mu\int_{W}\alpha \frac{\partial e_r}{\partial \nu} 
\frac{\partial e_s}{\partial \nu}d\sigma
+\mu\int_{W}
\alpha \nu_i\left(\frac{\partial e_r}{\partial x_i}\frac{\partial e_s}{\partial \nu}+\frac{\partial e_r}{\partial \nu}\frac{\partial e_s}{\partial x_i}
\right) d\sigma&
=\mu\int_{W}\alpha \frac{\partial e_r}{\partial \nu} 
\frac{\partial e_s}{\partial \nu}
\\ &=C(\psi)\delta_{rs}.
\end{aligned}$$
Since $\alpha$ is arbitrary, it must be that $\frac{\partial e_r}{\partial \nu}\frac{\partial e_s}{\partial \nu}=0$ if $r\neq s$ and $\left(\frac{\partial e_1}{\partial \nu}\right)^2=\dots=\left(\frac{\partial e_m}{\partial \nu}\right)^2$, that implies $\frac{\partial e_1}{\partial \nu}=\dots=\frac{\partial e_m }{\partial \nu}=0$.  Since we have also that $e_1=\dots=e_m=0$, the unique continuation property implies that the eigenfunctions must vanish identically, yielding a contradiction.

\subsection{Perturbations supported in $S$}\label{SDpertS}

Now, we take a perturbation which leaves $W$ fixed, so which is supported near $S$. 
The no splitting condition (\ref{nosplitSD}) is 
 \begin{equation} \begin{aligned}\label{splitS-SD}
    C(\psi)\delta_{rs}&=
    -I_1(\mu e_r,e_s)
+G( e_r,e_s) \\ &=
-\mu \int_{S}\psi_t\nu_t \nabla e_r\nabla e_s d\sigma
    +\mu \int_{S}
\psi_i\nu_j\left(\frac{\partial e_r}{\partial x_i}\frac{\partial e_s}{\partial x_j}+\frac{\partial e_r}{\partial x_j}\frac{\partial e_s}{\partial x_i}
\right) d\sigma\\ &\quad
+ \int_{ S}  e_r  e_s \div \psi d\sigma - \int_{ S}  e_r  e_s\sum_{r=1}^n \partial_\nu \psi_r \nu_r d\sigma
 \end{aligned}\end{equation}

Now, since $\partial_\nu e_r=\lambda e_r=1/\mu e_r$, on $S$ (and the same holds for $e_s$)  we have $\left(\frac{\partial e_r}{\partial \nu}\frac{\partial e_s}{\partial x_i}+\frac{\partial e_r}{\partial x_i}\frac{\partial e_s}{\partial \nu}
\right)=\frac1\mu\left( e_r\frac{\partial e_s}{\partial x_i}+\frac{\partial e_r}{\partial x_i} e_s
\right)=\frac1\mu\frac{\partial }{\partial x_i}(e_r e_s)$, so, integrating by parts, 
we have
$$ \begin{aligned}
\mu \int_{S}
\psi_i\nu_j\left(\frac{\partial e_r}{\partial x_i}\frac{\partial e_s}{\partial x_j}+\frac{\partial e_r}{\partial x_j}\frac{\partial e_s}{\partial x_i}
\right) d\sigma  &
=\mu \int_{S}
\psi_i\left(\frac{\partial e_r}{\partial x_i}\frac{\partial e_s}{\partial \nu}+\frac{\partial e_r}{\partial \nu}\frac{\partial e_s}{\partial x_i}
\right) d\sigma\\ &
=\int_{S}  
\psi_i \frac{\partial }{\partial x_i}(e_r e_s)
d\sigma\\ &=-\int_{S}  
\div\psi_i  (e_r e_s)
d\sigma
 \end{aligned}$$
Thus (\ref{splitS-SD}) becomes
$$
     C(\psi)\delta_{rs}=-\mu \int_{S}\psi_t\nu_t \nabla e_r\nabla e_s d\sigma
     - \int_{ S}  e_r  e_s\sum_{r=1}^n \partial_\nu \psi_r \nu_r d\sigma
$$
We can choose $\psi$ arbitrarily defined on $S$ and such that $\frac{\partial \psi_r} {\partial \nu}=0$ on $S$ for all $r=1,\dots,n$. Thus we get that  $\nabla e_r \nabla e_s=0$ if $r\neq s$ and $|\nabla e_1|^2=\dots=|\nabla e_m|^2$ almost everywhere on $S$. Thus the term $\int_{S}\psi_t\nu_t \nabla e_r\nabla e_s d\sigma$ can be absorbed in the left hand side of the condition and we remain with
$$
     C(\psi)\delta_{rs}=
     - \int_{ S}  e_r  e_s\sum_{r=1}^n \partial_\nu \psi_r \nu_r d\sigma
.$$
Since we can choose $\sum_{r=1}^n \partial_\nu \psi_r \nu_r $ as and arbitrary function on $S$, we get that $ e_r  e_s=0$ if $r\neq s$ and $| e_1|^2=\dots=| e_m|^2$ almost everywhere on $S$. This implies that on $S$ we get, for any $r=1,\dots, m$
$$
    e_r=0\ \hbox{and}\  \frac{\partial e_r}{\partial \nu}=\lambda e_r=0
,$$
which leads to a contradiction.

\section{Proof of Theorem \ref{th:SD}}\label{teorema1}
In this section we give the proof of Theorem \ref{th:SD}. While we outline the main steps of the argument, some technical details are omitted for the sake of readability.  For a fully detailed version of the proof, we refer the reader to \cite[Proof of Thm 1]{fgmp},\cite[Sec 5]{MiSNS}, and \cite[Lemma 7]{mi2}, where the same strategy is employed and all arguments are provided in full. The proofs of the other theorems in this manuscript follow an entirely analogous scheme. Therefore in the forthcoming sections  we restrict our attention to verifying the no splitting condition, which is the key ingredient required to implement this approach.
 
We start proving that we can split a single eigenvalue of multiplicity $m$ in several distinct  eigenvalues each with multiplicity strictly less then $m$. Then, by iterating the procedure, we can find a perturbation for which this eigenvalue splits in $m$ simple eigenvalue, and finally prove the main theorem of this paper.
We recall that if ${\lambda}$ an eigenvalue for the problem \eqref{Slosh} , then $\mu=1/\lambda$ an eigenvalue for the operator $T_0=E_\Omega$, and that the perturbed operator $T_\psi$ is defined in (\ref{Tpsi}). Finally, we will call $\mu^{\Omega_\psi}$ the eigenvalues for $T_\psi$ 

\begin{proposition}
\label{theorem:main-tool} Let $\bar \mu$ an eigenvalue for the operator $T_0$ with
multiplicity $m>1$. Let $U$ and open bounded interval such
that 
\[
\bar{U}\cap\sigma\left(T_0\right)=\left\{ \bar{\mu}\right\} ,
\]
where $\sigma(T_0)$  represents  the spectrum of $T_0$. 

Then, there exists $\psi\in \mathcal{D}$, and $\bar\varepsilon>0$  such that with $\|\psi\|_{\mathcal{D}}\le\bar\varepsilon$ and
\[
\bar{U}\cap\sigma(T_\psi))=\left\{ \mu_{1}^{{\Omega_\psi}},\dots,\mu_{k}^{{\Omega_\psi}}\right\} ,
\]
where $1<k\le m$ and, for all $t=1,\dots, k$, any eigenvalue $\mu_{t}^{{\Omega_\psi}}$ has multiplicity $m_t<m$, , with $\sum_{t=1}^{k}m_{t}=m$.
Also, we can choose $\psi$ such leaves $S$, or, reversely, $W$, untouched.
\end{proposition}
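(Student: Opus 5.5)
The plan is a contradiction argument built on Theorem \ref{thm:astratto} and the no-splitting analysis of Section \ref{nosplitti}. First I set up the perturbative spectral picture. Since $T_\psi$ is compact and self-adjoint (with respect to $\mathcal{A}_\psi$, or after the usual correction to $a_\Omega$) and depends continuously on $\psi$ at $0$, standard perturbation theory gives the following. If $\bar U\cap\sigma(T_0)=\{\bar\mu\}$ and $\bar\mu$ sits in the interior of $U$, then there is $\bar\varepsilon>0$ such that for $\|\psi\|_{\mathcal D}\le\bar\varepsilon$ the set $\bar U\cap\sigma(T_\psi)$ lies in the interior of $U$. It consists of finitely many eigenvalues $\mu_1^{\Omega_\psi},\dots,\mu_k^{\Omega_\psi}$ whose multiplicities add up to $m$. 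This follows from continuity of the spectral projection $P_\psi=\frac{1}{2\pi i}\oint_{\partial U'}(z-T_\psi)^{-1}dz$, whose rank is locally constant. The self-adjointness issue is handled as in \cite{fgmp}: either replace $T_\psi$ by an operator conjugated through $\mathcal{A}_\psi$, or observe that the relevant quadratic form is symmetric on the eigenspace. Then Lemma \ref{lem:diff} supplies the Fréchet differentiability required by Theorem \ref{thm:astratto}.

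Next I argue by contradiction. Assume that for every $\psi$ with $\|\psi\|\le\bar\varepsilon$, belonging to the admissible class (either $\psi$ fixes $S$ and has boundary support in $W$, or the reverse), we get $k=1$. Then the whole cluster is a single eigenvalue $\mu(\psi)$ of multiplicity $m$. Because the admissible perturbations form a linear subspace of $\mathcal D$ that contains all $\psi$ compactly supported in $\Omega$, Theorem \ref{thm:astratto} can be applied within this subspace. It yields, for every admissible $\psi$, a scalar $\rho(\psi)$ with $a_\Omega(T'_\psi(0)[\psi]e_r,e_s)=\rho(\psi)\delta_{rs}$.

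Formula \eqref{cond1SD} turns this into \eqref{cond2SD}. Taking first $\psi$ compactly supported in $\Omega$ lets us absorb $L_1$ into the constant, which reduces the condition to \eqref{nosplitSD}. Now Subsection \ref{SDpertW} gives the conclusion when $S$ is fixed, by choosing $\psi=\alpha\nu$ with $\alpha$ supported in $\mathring W$ away from $S\cap W$. Subsection \ref{SDpertS} gives it when $W$ is fixed, by choosing $\psi$ supported near $\mathring S$ with $\partial_\nu\psi=0$ and then an arbitrary $\partial_\nu\psi\cdot\nu$. In either case every $e_r$ has vanishing Cauchy data on an open piece of the boundary that avoids the interface. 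By the Remark (local $H^2$ regularity away from $S\cap W$, uniqueness for the Cauchy problem, unique continuation) this forces $e_r\equiv0$, a contradiction. So some admissible $\psi$ with $\|\psi\|\le\bar\varepsilon$ has $k>1$, and since the multiplicities sum to $m$, each $m_t<m$.

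The main obstacle is applying Theorem \ref{thm:astratto} correctly. It only controls perturbations for which the multiplicity is preserved, while the contradiction hypothesis must supply that for \emph{all} small $\psi$ in a linear space. I also need the test functions $\psi$ (compactly supported, $\alpha\nu$ extended as a $C^2$ field, and fields with prescribed $\partial_\nu\psi$ on $S$) to be genuine elements of $\mathcal D$ that respect the fixed component and stay away from the interface. I would first build these explicitly, as cut-offs of the extended normal field times $\alpha$, or as $\beta(x)\,d(x)\,\nu$ with $d$ the signed distance. This last choice makes $\psi=0$ on $S$ and $\partial_\nu\psi\cdot\nu=\beta$, exploiting the fact that the term involving $\partial_\nu\psi$ is decoupled from $\psi|_S$. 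Finally I would check that scaling $\psi\mapsto t\psi$ keeps the perturbation inside the admissible small ball, so that the linear conditions hold for every direction.
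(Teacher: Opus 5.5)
Your architecture matches the paper's proof: total multiplicity $m$ in $U$ by continuity of the spectral projection, Theorem \ref{thm:astratto} on the linear space of admissible $\psi$, reduction to \eqref{nosplitSD}, and a contradiction by unique continuation.

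The branch that keeps $S$ fixed is fine. For $\psi=\alpha\nu$ with $\alpha$ supported in $\mathring W$, the condition becomes $\partial_\nu e_r\,\partial_\nu e_s=c(x)\delta_{rs}$ on $\mathring W$. For $m\ge 2$ this forces $\partial_\nu e_r=0$ there.

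The branch that keeps $W$ fixed has a genuine gap, and your own test field $\psi=\beta\,d\,\nu$ exposes it. With $\beta$ supported near $\mathring S$, this $\psi$ vanishes on all of $\partial\Omega$. Hence $(I+\psi)(\Omega)=\Omega$, $S_\psi=S$, $W_\psi=W$, and $T_\psi=\gamma_\psi T_0\gamma_\psi^{-1}$ is similar to $T_0$. Such a $\psi$ can never split $\bar\mu$, so it cannot produce a contradiction.

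Consistently, every term of \eqref{splitS-SD} vanishes for it. The first two vanish because $\psi|_S=0$. The last two cancel, because $\psi|_S=0$ forces $\partial_j\psi_i=(\partial_\nu\psi_i)\nu_j$ on $S$, so $\div\psi=\sum_r\partial_\nu\psi_r\nu_r$ there. In general $\div\psi-\sum_r\partial_\nu\psi_r\nu_r=\div_S\psi$ is the tangential divergence, which depends on $\psi|_S$ only. So $\partial_\nu\psi\cdot\nu$ is not decoupled from $\psi|_S$ at all.

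The apparent freedom comes from the step in Subsection \ref{SDpertS} that replaces $\int_S\psi_i\partial_i(e_re_s)\,d\sigma$ by $-\int_S\div\psi\,e_re_s\,d\sigma$. That is not a valid integration by parts on the hypersurface $S$: for your $\psi$ the left side is $0$ and the right side is $\mp\int_S\beta e_re_s\,d\sigma$. Split $\psi$ into tangential and normal parts and use $\partial_\nu(e_re_s)=2\lambda e_re_s$ on $S$. The correct identity is then $\int_S\psi_i\partial_i(e_re_s)\,d\sigma=-\int_S e_re_s\div_S\psi\,d\sigma+\int_S(\psi\cdot\nu)(H+2\lambda)e_re_s\,d\sigma$, with $H$ the sum of the principal curvatures.

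The $\div_S\psi$ term now cancels $G$ exactly. The no-splitting condition therefore depends only on $\psi\cdot\nu$ on $S$, as the Hadamard formula predicts. Pointwise on $\mathring S$ it reads
\[
\nabla_\tau e_r\cdot\nabla_\tau e_s-(\lambda^2+\lambda H)\,e_re_s=c(x)\,\delta_{rs}.
\]
This is one combined condition. It does not separate into ``$\nabla e_r\nabla e_s$ scalar'' and ``$e_re_s$ scalar'', so the argument never reaches $e_r=0=\partial_\nu e_r$ on $S$.

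To close the $W$-fixed case you need a further argument ruling out this identity on an open subset of $\mathring S$ for $m\ge 2$ linearly independent eigenfunctions. Neither your proposal nor the cited subsection supplies one.
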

\begin{proof}
We recall that if $\|\psi\|_{\mathcal{D}}$ is small, the multiplicity of an eigenvalue $\mu^{\Omega_\psi}$ near $\bar{\mu}$ is lesser or equal than the multiplicity of $\bar{\mu}$.
In the previous section we have proved that it is not possible to have persistence of multiplicity for small perturbation supported either in $S$ or in $W$. Thus, there exists at least a small perturbation for which the multiplicity decreases.
\end{proof}

The next corollary follows from Proposition \ref{theorem:main-tool}, composing
a finite number of perturbations and recasting the claim for problem \eqref{Slosh} .
\begin{corollary}\label{cor:auto-semplice-0}
Let $\bar\mu$ an eigenvalue for problem \eqref{Slosh} with mutliplicity $m$.  For any $\varepsilon>0$ sufficiently small there exist $\psi \in \mathcal{D}$  with $\|\psi\|_\mathcal{D}<\varepsilon$, which leaves $S$, or  $W$, fixed, for which  Problem (\ref{S-Dpert})  has exactly $m$ simple eigenvalues in a neighborhood of $\bar\mu$ .

\end{corollary}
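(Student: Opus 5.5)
The plan is to iterate Proposition \ref{theorem:main-tool} a finite number of times, composing the resulting small perturbations and controlling their total norm. Fix the eigenvalue $\bar\mu$ of multiplicity $m$, and an open interval $U$ with $\bar U\cap\sigma(T_0)=\{\bar\mu\}$.

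\textbf{First step.} By Proposition \ref{theorem:main-tool} there is $\psi_1$ with $\|\psi_1\|_{\mathcal{D}}<\varepsilon/m$, leaving $S$ (respectively $W$) fixed, such that $\bar U\cap\sigma(T_{\psi_1})$ consists of $k_1>1$ eigenvalues. Each has multiplicity strictly less than $m$, and the multiplicities sum to $m$. Since the spectrum of $T_{\psi}$ depends continuously on $\psi$ (Lemma \ref{lem:diff} gives differentiability), we may shrink $\psi_1$ if necessary so that no eigenvalue enters or leaves $\bar U$. In other words, $\partial U\cap\sigma(T_{\psi_1})=\emptyset$ and the total multiplicity inside $U$ remains $m$.

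\textbf{Iteration.} Now regard $\Omega_1:=\Omega_{\psi_1}$ as the new reference domain. It is again $C^{1,1}$, with boundary pieces $S_{\psi_1}$ and $W_{\psi_1}$, and its fixed component is untouched. For each eigenvalue $\mu^{\Omega_1}_t$ in $U$ with $m_t>1$, choose disjoint subintervals $U_t\subset U$ isolating it. Then apply Proposition \ref{theorem:main-tool} on $\Omega_1$, where the no-splitting analysis of Section \ref{nosplitti} is valid verbatim on any such domain. This yields $\psi_2$, with norm as small as we like, that lowers the multiplicity of that eigenvalue. Taking $\psi_2$ small enough also ensures that eigenvalues which are already simple stay simple and stay inside their own subintervals, since simple isolated eigenvalues are stable under small perturbations.

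Each step strictly decreases the maximal multiplicity in at least one cluster, so at most $m-1$ steps are needed. The composed map
\[
(I+\psi_N)\circ\cdots\circ(I+\psi_1)=I+\psi
\]
satisfies $\|\psi\|_{\mathcal{D}}<\varepsilon$ provided each $\|\psi_j\|$ is chosen small relative to $\varepsilon/m$. This uses that $\mathcal{D}$-norms of compositions of near-identity maps are controlled by $C^2$ bounds on the chain rule. Each factor fixes the chosen component, so the composition does too.

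The result is exactly $m$ simple eigenvalues of $T_\psi$ in $U$, hence $m$ simple eigenvalues of \eqref{S-Dpert} near $1/\bar\mu$.

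\textbf{Main obstacle.} I expect the delicate point to be the bookkeeping across iterations. One must guarantee, first, that the splitting obtained at step $j$ is not undone by later perturbations, which is handled by openness of simplicity and by continuity of eigenvalues under $\|\cdot\|_{\mathcal{D}}$-small perturbations. Second, the composed perturbation must be written as a single $I+\psi$ with $\psi\in\mathcal{D}$ and small norm, fixing the same boundary component. I would first handle this by choosing each $\psi_{j+1}$ only after $\psi_j$ is fixed, with size below the gap between distinct perturbed eigenvalues.
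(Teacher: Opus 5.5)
Your proposal is correct and follows the same route as the paper, which obtains the corollary by applying Proposition~\ref{theorem:main-tool} finitely many times and composing the resulting perturbations. Your extra bookkeeping makes explicit what the paper leaves implicit: at most $m-1$ steps suffice, already-separated eigenvalues stay separated under smaller later perturbations, $(I+\psi_N)\circ\cdots\circ(I+\psi_1)-I$ obeys a $C^2$ bound, and the fixed boundary component is preserved.
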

At this point we are in position to prove the main result of this paper.
\begin{proof}[Proof of Theorem  \ref{th:SD}]
To prove the result, we iterate the procedure of Proposition \ref{theorem:main-tool} and Corollary \ref{cor:auto-semplice-0} countably many times.  
Specifically, suppose that  $ \mu_{q_1}$, for some $q_1\in \mathbb{N}$ is the first multiple eigenvalue, with multiplicity $m_1>1$, for problem \eqref{Slosh} on $\Omega$. Then, by Proposition  \ref{theorem:main-tool} and Corollary  \ref{cor:auto-semplice-0}, there exists $\varepsilon_1$ and a perturbation $\psi_1\in\mathcal{D}$ with  $\|\psi_1\|_\mathcal{D}<\varepsilon_1$, such that for Problem (\ref{S-Dpert}) in $\Omega_{1}:=(I+\psi_1)\Omega$ all the eigenvalues $\mu^{\Omega_1}_1, \mu^{\Omega_1}_1\dots,\mu^{\Omega_1}_{q_1},\dots \mu^{\Omega_1}_{{q_1}+m_1-1}$ are simple. Let us set $F_1:=(I+\psi_1)$. 

If all the eigenvalues of Problem (\ref{S-Dpert}) on $\Omega_1$ are simple, the proof is complete. Otherwise, let $ \mu^{\Omega_1}_{q_2}$, with $q_2\ge q_1+m_1$ be the first multiple eigenvalue with multiplicity $m_2>1$.  In this case, we can find another perturbation $\psi_2$, with $\|\psi_2\|_\mathcal{D}<\varepsilon_2< \varepsilon_1/2$ which splits the eigenvalue  $ \mu^{\psi_1}_{q_2}$ while leaving the multiplicity of the previous eigenvalues unchanged. More precisely, setting $F_2:=(I+\psi_2)$, we define the new perturbed domain $\Omega_2=F_2(\Omega_1)=F_2\circ F_1 (\Omega)$ for which all the eigenvalues $\mu^{\Omega_2}_1,\dots, \mu^{\Omega_2}_{{q_2}+m_2-1}$  for Problem (\ref{S-Dpert}) in $\Omega_2$ are simple. 

We can iterate this procedure countably many times, yielding a sequence of positive numbers $0<\varepsilon_\ell<    \frac{\varepsilon_\ell}{2^\ell}$, a sequence of perturbations $\psi_\ell\in \mathcal{D}$ with $\|\psi_\ell\|_\mathcal{D}<\varepsilon_\ell$, a sequence of maps $F_\ell:=(I+\psi_\ell)$ and a sequence of domains $\psi_\ell=F_\ell\circ\dots\circ F_1(\Omega)$ for  which all the eigenvalues $\mu^{\Omega_\ell},\dots,\mu^{\Omega_\ell}_{q_\ell+m_\ell-1}$  for Problem (\ref{S-Dpert}) in $\Omega_\ell$ are simple. 

To conclude the proof, define $\mathcal{F}_\ell:=F_\ell\circ\dots\circ F_1$. By construction, and due to the choice on the size of $\varepsilon_l$, we have that $\mathcal{F}_\ell\rightarrow \mathcal{F}_\infty$ in $C^2(\mathbb{R}^n,\mathbb{R}^n)$, that $\psi_\infty:=\mathcal{F}_\infty-I$ belongs to $\mathcal{D}$ and that $\|\psi_\infty\|_\mathcal{D}<2\varepsilon_1$. At this point one can easily prove that  the eigenvalues for Problem (\ref{S-Dpert}) in $\Omega_\infty:=I+\psi_\infty(\Omega)$ are all simple and we conclude the proof. \end{proof}

\section{Proof of Theorem \ref{th:SN}}\label{sec:sloshsimp}

We recast problem (\ref{Slosh0}) adding $u$ on both side of the boundary condition on $S$, namely 
\begin{equation*}
   \left\{
    \begin{array}{cc}
         -\Delta u=0& \text{ on } \Omega\\
        \partial_\nu u+u=(\lambda+1) u & \text{ in } S\\
        \partial_\nu u=0 & \text{ in }  W
    \end{array}.
    \right.
\end{equation*}
and, recalling that $d(x):=1_S$, i.e. $d(x)=1$ if $x\in S$ and $d(x)=0$ if $x\in W$, and setting $\hat \lambda:=\lambda+1$, we write  (\ref{Slosh0})  in the more compact form
\begin{equation}\label{Slosh0comp}
   \left\{
    \begin{array}{cc}
         -\Delta u=0& \text{ on } \Omega\\
        \partial_\nu u+d(x)u=\hat \lambda d(x) u & \text{ in } \partial \Omega
    \end{array}.
    \right.
\end{equation}
Consider on $H^1(\Omega)$ the scalar product
\[
\hat a_{\Omega}(u,v):=\int_\Omega \nabla u \nabla v \ dx +\int_{\partial \Omega} duv\ d\sigma .
\]
This scalar product is equivalent to the usual one and therefore equips $H^1(\Omega)$ with the equivalent norm $\| u\|_{H^1}=\|u\|:=\big(a_{\Omega}(u,v)\big)^{1/2}$. As in the previous case, a function $e$ is an eigenfunction with corresponding eigenvalue $\hat\lambda$, that is $e$ solves (\ref{Slosh0comp})  in a weak sense, if and only if 
$$
    \hat a_{\Omega}(e,\varphi)= \hat\lambda\int_{\partial \Omega} d(x)e\varphi\ d\sigma =(\lambda+1)\int_{\partial \Omega} d(x)e\varphi\ d\sigma .
$$
for all $\varphi\in H^1(\Omega)$. 

Henceforth, with a slight abuse of notation, we will retain the notations introduced in Section \ref{Sec:frame}. In particular, we will use the same names for operators and quantities that, although differing in their precise definitions, play analogous roles in the proofs. This choice  is made to to maintain uniformity throughout the manuscript and, in our view, contributes to a clearer presentation of the main results.

Given $f\in H^1(\Omega)$, we consider the continuous linear functional on $H^1(\Omega)$ given by
$$
L_f(\varphi):=\int_{\partial\Omega} d(x)f\varphi\  d\sigma,
$$
and, by Riesz theorem, for any $f\in H^1(\Omega)$, there exists a unique $w\in H^1(\Omega)$, such that 
\[
\hat a_{\Omega}(w,\varphi)=L_f (\varphi)=\int_{\partial \Omega} d(x)f\varphi\  d\sigma .
\]
Again, let $E_\Omega$  the operator, from $ H^1(\Omega)$ in itself, which associates $f$ to ,$w$, i.e. 
\begin{equation}\label{eq:Eusimp}
\hat  a_{\Omega}(w,\varphi)=a_{\Omega}(E_\Omega(f),\varphi)=\int_{\partial \Omega} d(x)f\varphi\  d\sigma 
\end{equation}
for all $\varphi \in H^1 (\Omega)$. As before,  $(\hat \lambda, e)$ are an eigenpair of problem (\ref{Slosh0comp}), in a weak sense, if and only if $e=E_\Omega(\hat \lambda e)$, or, equivalently, $e$ is an eigenfunction of $E_\Omega$ with eigenvalue $\mu=1/\hat \lambda$.

Also, we define the pullback and the pushforward maps $\gamma_\psi:H^1(\Omega_{\psi})\rightarrow H^1(\Omega)$ and $\gamma_ \chi=\gamma_\psi^{-1}$ as introduced in   (\ref{gamma-psi}).

Finally, we pull back the bilinear form $\hat a_{\Omega_\psi} $ defined  on $H^1 (\Omega_\psi)$ as follows. For any $\tilde u,\tilde \varphi\in H^1 (\Omega_\psi)$ set
$$
\begin{aligned}
    \hat a_{\Omega_\psi}(\tilde u,\tilde \varphi)&=
   \int_{ \Omega_\psi}\nabla \tilde u\nabla\tilde \varphi\  dx
    +\int_{\partial \Omega_\psi}d_\psi \tilde u\tilde \varphi\ d\sigma 
    \\ &
    =\mathcal{A}_\psi (u,v)
    +\int_{S} u \varphi B_\psi d\sigma =:\mathcal{E}_\psi (u,\varphi) 
\end{aligned}$$
 where $u=\gamma_{\psi}\tilde u$, $\varphi=\gamma_{\psi} \tilde \varphi$  and $\mathcal{A}_\psi$ and $B_\psi$ are defined, respectively, in Lemma \ref{lem:Astorto} and in Lemma \ref{lem:B}.   Also here $d_\psi(\xi)=1_{S_\psi}=d_\psi(x+\psi(x))=d(x)$ for $\xi:=(x+\psi(x))\in\partial\Omega_\psi$ and for $x\in \partial \Omega$.
We have
\begin{equation}\label{eq:fond2simp}
      \hat  a_{\Omega_\psi}(E_{\Omega_\psi}(\tilde u),\tilde \varphi)= 
     \hat   a_{\Omega_\psi}( E_{\Omega_\psi}(\gamma_\psi^{-1} u),\gamma_\psi^{-1}  \varphi)
=\mathcal{E}_\psi (\gamma_\psi E_{\Omega_\psi}(\gamma_\psi^{-1} u),\varphi)
\end{equation}
so, defining $T_\psi:= \gamma_\psi E_{\Omega_\psi}\gamma_\psi^{-1} $ , $T_\psi :H^1(\Omega)\rightarrow H^1(\Omega)$ we have that $T_0=E_\Omega $,  and, combining (\ref{eq:Eusimp}) and (\ref{eq:fond2simp}), it holds
\begin{equation}\label{fondsimp}
    \mathcal{E}_\psi (T_\psi u,\varphi)=\int_{\partial \Omega_\psi} d_\psi(\xi)\tilde u \tilde \varphi\ B_\psi d\sigma =\int_{\partial \Omega} d(x)u\varphi\ B_\psi d\sigma .
\end{equation} 
By the lemmas in section \ref{Sec:der} we now possess all necessary components  equation (\ref{fondsimp}) with respect to $\psi$, obtaining the principal result of this section
 \begin{corollary}
The operators  $\mathcal{E}_\psi$ and $T_\psi$ are Frechét differentiable 
with respect to $\psi$ at point $0$, and it holds 
\begin{equation*}
    \mathcal{E}_\psi'(0)[\psi] (T_0 u,\varphi)+
      \mathcal{E}_0 (T_\psi '(0)[\psi] u,\varphi)=\left. \frac{\partial}{\partial \psi}\int_{\partial \Omega}d(x)  u\varphi\ B_\psi d\sigma\right|_{\psi=0}
\end{equation*} 
which, in light of Lemma \ref{lem:Astorto} and Lemma \ref{lem:B} becomes
\begin{equation}\begin{aligned}\label{cond1}
   \hat a_\Omega  (T_\psi '(0)[\psi] u,\varphi)&=\left. \frac{\partial}{\partial \psi}\int_{\partial \Omega}d(x)  u\varphi\ B_\psi d\sigma\right|_{\psi=0}
- \mathcal{E}_\psi'(0)[\psi] (T_0 u,\varphi)
\\
&=\left. \frac{\partial}{\partial \psi}\int_{\partial \Omega}d(x)  u\varphi\ B_\psi d\sigma\right|_{\psi=0}\\ &\quad
- \mathcal{A}_\psi'(0)[\psi] (T_0 u,\varphi)
-\left. \frac{\partial}{\partial \psi}\int_{\partial \Omega}d(x) T_0 u\varphi\ B_\psi d\sigma\right|_{\psi=0}
\\ & =-L_1( T_0u, \varphi)-
I_1( T_0u, \varphi)
-G(T_0 u, \varphi)+G(u,\varphi).
\end{aligned}\end{equation}
\end{corollary}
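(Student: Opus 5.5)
The plan is to differentiate identity \eqref{fondsimp} at $\psi=0$, exactly as Lemma \ref{lem:diff} was obtained from \eqref{eq:SDfond}, and then to substitute the formulas of Section \ref{Sec:der}.

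The first step is Fréchet differentiability. Write $\psi\mapsto \mathcal{E}_\psi=\mathcal{A}_\psi+\int_S(\cdot)(\cdot)B_\psi\,d\sigma$. This map is $C^1$ from a neighbourhood of $0$ in $\mathcal{D}$ into the bounded bilinear forms on $H^1(\Omega)$, for the following reasons.
\begin{itemize}
\item The integrand of $\mathcal{A}_\psi$ depends only on $D\chi=-D\psi+O(\|\psi\|^2)$ and on $|J_\psi|$. Both are analytic functions of $D\psi$, bounded in $C^0$ by $\|\psi\|_{\mathcal{D}}$.
\item $B_\psi$ is a smooth function of $D\psi$ on $\partial\Omega$.
\item The trace map $H^1(\Omega)\to L^2(\partial\Omega)$ is bounded.
\end{itemize}
Hence the remainders are uniformly bounded in operator norm.

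Next, $\mathcal{E}_0=\hat a_\Omega$ is coercive, so for small $\psi$ the form $\mathcal{E}_\psi$ is coercive too. Define $T_\psi u$ by Lax–Milgram from \eqref{fondsimp}: it is the unique $w$ with $\mathcal{E}_\psi(w,\varphi)=\int_{\partial\Omega} d\,u\varphi B_\psi\,d\sigma$ for all $\varphi$. Equivalently, $T_\psi=\mathcal{L}_\psi^{-1}\mathcal{R}_\psi$, where $\mathcal{L}_\psi$ and $\mathcal{R}_\psi$ are the operators associated with the two sides. Both depend $C^1$ on $\psi$, and inversion is smooth on invertible operators, so $T_\psi$ is Fréchet differentiable at $0$.

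For the identity, differentiate $\mathcal{E}_\psi(T_\psi u,\varphi)$ by the product rule. This gives $\mathcal{E}'_\psi(0)[\psi](T_0u,\varphi)+\mathcal{E}_0(T'_\psi(0)[\psi]u,\varphi)$, and it must equal the derivative of the right-hand side. That is the first displayed formula.

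Then solve for $\hat a_\Omega(T'_\psi(0)[\psi]u,\varphi)$, using $\mathcal{E}_0=\hat a_\Omega$. Split $\mathcal{E}'_\psi(0)[\psi]=\mathcal{A}'_\psi(0)[\psi]+\frac{\partial}{\partial\psi}\int_{\partial\Omega}d\,(\cdot)(\cdot)B_\psi\big|_{\psi=0}$ and substitute the lemmas of Section \ref{Sec:der}.
\begin{itemize}
\item Lemma \ref{lem:Astorto} gives $\mathcal{A}'_\psi(0)[\psi](T_0u,\varphi)=L_1(T_0u,\varphi)+I_1(T_0u,\varphi)$.
\item The second part of Lemma \ref{lem:B}, with $d=1_S$, gives $G(T_0u,\varphi)$ for the boundary part of $\mathcal{E}'$. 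The same formula gives $G(u,\varphi)$ for the right-hand side.
\end{itemize}
Collecting terms yields \eqref{cond1}.

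The main obstacle is justifying Lemma \ref{lem:B} with the weight $d=1_S$. Its proof used the divergence theorem with $\tilde F=\tilde u\tilde\varphi\nu_{\Omega_\psi}$, and multiplying by the discontinuous $d$ is not directly allowed. I would avoid this by computing $B_\psi$ pointwise. It is the tangential Jacobian $|\det D(I+\psi)|\,|(D(I+\psi))^{-T}\nu|$, and its linearization is $\div\psi-\nu\cdot(D\psi\,\nu)=\div\psi-\sum_r\partial_\nu\psi_r\nu_r$. This formula is pointwise, so it can be integrated against $d\,u\varphi$ for any bounded weight, and the interface $S\cap W$ has zero surface measure. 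The derivative $\mathcal{A}'_\psi$ at $T_0u$ also needs care: its boundary form $I_1$ involves traces of gradients, which require $H^2$ regularity away from $S\cap W$. In the later application the perturbations vanish near the interface, which is the regime the paper uses, so I would keep the volume form $\int_\Omega \div\psi\,\nabla u\nabla\varphi-(\partial_iu\partial_j\varphi+\partial_ju\partial_i\varphi)\partial_j\psi_i$ as the rigorous definition and integrate by parts only where this regularity is available.
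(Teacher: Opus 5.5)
Your proposal is correct and follows the paper's route. You differentiate \eqref{fondsimp} by the product rule, use $\mathcal{E}_0=\hat a_\Omega$, split $\mathcal{E}'_\psi(0)[\psi]$ into $\mathcal{A}'_\psi(0)[\psi]$ plus the weighted boundary term, and substitute Lemma \ref{lem:Astorto} and the weighted form of Lemma \ref{lem:B}. Your additional justifications fill in points the paper leaves implicit or defers to the cited references, without changing the argument: the Fr\'echet differentiability of $T_\psi=\mathcal{L}_\psi^{-1}\mathcal{R}_\psi$, the pointwise tangential-Jacobian computation of $B'_\psi$ that handles the discontinuous weight $d=1_S$, and the $H^2$ caveat for the boundary form $I_1$.
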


As in the preceding section, let $\mu=1/\hat \lambda=1/(\lambda +1)$ be an eigenvalue of $T_0$ with multiplicity $m>1$, and let $\{e_1, \dots,e_m \}$ be an orthonormal base for the corresponding  eigenspace. By the no splitting condition, if $\psi$ preserves the multiplicity of $\mu$, then there exists $\rho=\rho(\psi)\in \mathbb{R}$ such that
$$
    \hat   a_\Omega  (T_\psi '(0)[\psi] e_r,e_s)=  \rho(\psi)\delta_{rs}.
$$
Having in mind (\ref{cond1}), and using that $T_0e_r=\mu e_r$ for all $r=1,\dots,m$, it follows that
\begin{equation}
    \label{cond2}
    -L_1(\mu e_r,e_s)-
I_1(\mu  e_r,e_s)
-G(\mu  e_r, e_s)+G( e_r,e_s)=\rho\delta_{rs}.
\end{equation}
Choosing a perturbation $\psi$ compactly supported in the interior of $\Omega$ we can, as previously shown, absorb $L_1$  in the right hand side of (\ref{cond2}). Hence, the no splitting condition becomes
 \begin{equation}{\label{nosplitsimp}}
    -I_1( \mu e_r,e_s)
-G( \mu e_r, e_s)+G(e_r,e_s)=C(\psi)\delta_{rs}.
\end{equation}

\subsection{Perturbations supported in $S$}\label{sec:Sneumann}

Now, we take a perturbation which leaves $W$ fixed. The no splitting condition simplifies as
$$
    -I_1(  \mu e_r,e_s)+(1-\mu) G( e_r, e_s)
=C(\psi)\delta_{rs}, 
$$
that is
 \begin{equation} \begin{aligned}\label{eq:splitS0}
 C(\psi)\delta_{rs}  &= - \mu\int_{S}\psi_t\nu_t\nabla e_r\nabla e_s d\sigma
    + \mu \int_{S}
\psi_i\nu_j \left(\frac{\partial e_r}{\partial x_i}\frac{\partial e_s}{\partial x_j}+\frac{\partial e_r}{\partial x_j}\frac{\partial e_s}{\partial x_i}
\right) d\sigma\\ &\quad
+(1-\mu)  \int_{S}e_r e_s \left(\div \psi -\sum_{r=1}^n \partial_\nu \psi_r \nu_r\right)d\sigma.\end{aligned}
 \end{equation}
With a strategy similar to the one performed in \ref{SDpertS}, we notice that  $\nu_j\frac{\partial e_r}{\partial x_j}=\partial_\nu e_r=\lambda e_r=(\hat \lambda-1) e_r$, so integrating by parts we get, for the second term of (\ref{eq:splitS0}), 
$$\begin{aligned}
    \mu \int_{S}
\psi_i\nu_j \left(\frac{\partial e_r}{\partial x_i}\frac{\partial e_s}{\partial x_j}+\frac{\partial e_r}{\partial x_j}\frac{\partial e_s}{\partial x_i}
\right) d\sigma&=
\mu(\hat \lambda-1) \int_{S}
\psi_i\frac{\partial }{\partial x_i}\left(e_r e_s
\right) d\sigma\\ &=-\frac{ \hat \lambda-1} {\hat \lambda }
\int_{S}
e_r e_s\div\psi
 d\sigma,
\end{aligned}$$
and, since in the third integral of (\ref{eq:splitS0}), $(1-\mu)=( \hat \lambda-1)/{ \hat \lambda}$, the no splitting condition becomes

$$     \frac 1{ \hat \lambda}\int_{S}\psi_t\nu_t\nabla e_r\nabla e_s d\sigma
+\frac{ \hat \lambda-1}{ \hat \lambda}   \int_{S}e_r e_s \left(\sum_{r=1}^n \partial_\nu \psi_r \nu_r\right)d\sigma \ =
C(\psi)\delta_{rs}.
$$
There exists at least one component $\nu_t \neq 0$. For the sake of simplicity, let us take $\nu_1\neq 0$.
Now choose $\psi=(\psi_1,0,\dots,0)$ with $\psi_1$ arbitrarily defined on $S$ and such that $\frac{\partial \psi_1} {\partial \nu}=0$. Thus  $\nabla e_r \nabla e_s=0$ if $r\neq s$ and $|\nabla e_1|^2=\dots=|\nabla e_m|^2$ almost everywhere on $S$ and the term $\int_{S}\psi_t\nu_t \nabla e_r\nabla e_s d\sigma$ can be absorbed in the left hand side of the condition and we remain with
$$
     C(\psi)\delta_{rs}=
     \frac{ \hat \lambda-1}{ \hat \lambda} \int_{ S}  e_r  e_s\sum_{r=1}^n \partial_\nu \psi_r \nu_r d\sigma
$$
Since we can choose $\sum_{r=1}^n \partial_\nu \psi_r \nu_r $ as and arbitrary function on $S$, we get that $ e_r  e_s=0$ if $r\neq s$ and $| e_1|^2=\dots=| e_m|^2$ almost everywhere on $S$. This implies that on $S$ we get, for any $r=1,\dots, m$
$$
    e_r=0\ \hbox{and}\ \frac{\partial e_r}{\partial \nu}=( \hat \lambda-1) e_r=0
,$$
which leads to a contradiction.

\subsection{Perturbations supported in $W$ (when $\Omega \subset \mathbb{R}^2$)}
If $\psi$ leaves $S$ fixed, namely,  the support of $\psi$ intersects $\partial \Omega$ only within the set $W$, we have that $G( e_r,e_s)=G(T_0 e_r,e_s)\equiv 0$. Proceeding as in Subsection \ref{SDpertW}, we get that the no splitting condition (\ref{nosplitsimp}) becomes
\begin{equation}\label{eq:nospiltDir}
 C(\psi)\delta_{rs}=-\mu\int_{W}\psi_t \nu_t \nabla e_r\nabla e_s
 d\sigma
+\mu\int_{W}
\psi_t\left(\frac{\partial e_r}{\partial \nu}\frac{\partial e_s}{\partial x_t}+\frac{\partial e_r}{\partial x_t}\frac{\partial e_s}{\partial \nu}
\right) d\sigma 
\end{equation}
Now, since $\partial_\nu e_r=\partial_\nu e_s=0$, on $W$, we have $\left(\frac{\partial e_r}{\partial \nu}\frac{\partial e_s}{\partial x_t}+\frac{\partial e_r}{\partial x_t}\frac{\partial e_s}{\partial \nu}
\right)=0$, so
\begin{equation*}
    \mu\int_{W}\psi_t \nu_t \nabla e_r\nabla e_sd\sigma
=C(\psi)\delta_{rs}.
\end{equation*}
By the arbitrariness of  $\psi$ we have that if $r\neq s $ then $\nabla e_r\nabla e_s=0$ almost everywhere on $W$, and that $|\nabla e_1|^2=\dots=|\nabla e_m|^2$ almost everywhere on $W$. Unfortunately, we were not able to derive a contradiction based on this condition in general. However,  we can complete the proof of the Theorem \ref{th:SN} in the case $\Omega\subset \mathbb{R}^2$. 
Since $\partial_\nu e_r=0$ on $W$ for $r=1,\dots,m$,  consider $\tau$ the tangent vector in a point to $W$.  At this point we have that, by the no splitting condition, the set $\{\partial_\tau e_1/|\partial_\tau e_1|, \dots \partial_\tau e_m/|\partial_\tau e_m| \}$ is an orthonormal set in the tangent space to $W$ which is  $\mathbb{R}$.  This is impossible unless $m=1$ and this ends the proof.

\begin{remark}\label{rem-n3}
    We want to point out that this latter idea partially works in higher dimension $n\ge 3$. In this case the set of tangential derivatives is an orthonormal set in $\mathbb{R}^{n-1}$, so it is possible to split any eigenvalue with a perturbation which leaves $W$ fixed, up to a multiplicity of $n-1$. 
\end{remark}

\appendix

\section{A different variational approach to Theorem  \ref{th:SN}.}\label{Sec:frame-bis}
In this section we give a sketch of an alternative proof of Theorem  \ref{th:SN}, 
which could be interesting per se, since the variational framework and the pull back and push back forms are different.
The main point in the technique of the proof is the choice of a good variational space with a scalar product that make the weak formulation easy.
One possibility, when attacking problem (\ref{Slosh0}), is to work in the Hilbert space $\mathcal{H}(\Omega):=\{u\in H^1(\Omega)\ :\ \int_{\partial \Omega}du=0=\int_S u \}$ equipped with the equivalent scalar product
$$a_\Omega(u,v):=\int_\Omega \nabla u \nabla v dx.$$
One can show easily that $u$ is an eigenfunction with eigenvalue $\lambda$,   in a weak sense if and only if 
\begin{equation}\label{eq:weakSD}
    a_{\Omega}(u,\varphi)=\lambda\int_{\partial \Omega} u\varphi\ d\sigma  \text{ for all }\varphi\in \mathcal{H}(\Omega)
\end{equation}

and that it is easy to show that any $u\in H^1(\Omega)$ eigenfunction of  (\ref{Slosh0}) with eigenvalue $\lambda\neq0$  belongs to $ \mathcal{H}(\Omega)$.

As ususal we definte $E_\Omega (f)$, as the operator from  $ \mathcal{H}(\Omega)$ in itself such that
\begin{equation}
 a_\Omega(E_\Omega(f),\varphi)=\int_{S} f\varphi\  d\sigma 
\text{ for all }\varphi \in \mathcal{H},
\end{equation}
 so, $(\lambda, u)$, is an eigenpair of (\ref{Slosh0}), in a weak sense, if and only if $u=E_\Omega(\lambda u)$, or equivalently, if $E_\Omega (u)=\frac 1\lambda u$.   

Now we consider the operator $E_{\Omega_\psi}$ on the perturbed domain $\Omega_\psi$ and we have to pull it back in the space $\mathcal{H}(\Omega)$. 

To this end the usual map 
\begin{align*}
\gamma_\psi&:{H^1}(\Omega_{\psi})\rightarrow {H^1}(\Omega);\\
\gamma_\psi(\tilde{u})&:=\tilde u\circ (I+\psi)(x).
\end{align*}
is not sufficient, since we have to deal with a change of variable in the condition $\int_{\partial \Omega}du=0$ in the definition of the space $\mathcal{H}(\Omega)$.

Called, as before  $B_\psi(x)$, defined on $\partial \Omega$, be   the Jacobian  of the boundary change of variables from $\partial\Omega_\psi $ to $\partial\Omega$, induced by $\psi$., we can extend this function $B_\psi$ to the whole $\Omega$ as follows. 
Let $\Sigma_k=\{x\in \Omega \ :\ d(x,\partial \Omega)<k\}$. Since $\Omega$ is regular there exist $k$ such that $\Sigma_k$ is a tubular neighborhood. Then, for $x\in \Sigma_k$ we can write $x=y+t\nu(y)$ where $y\in\partial\Omega$ and $t>0$, and we define
\begin{equation}
    \bar{B}_\psi(x)=\left\{
    \begin{array}{cc}
    1&x\in \Omega\smallsetminus \Sigma_k\\
  \left(1-\frac tk\right)B_\psi(y)+\frac tk  &x\in \Sigma_k
    \end{array}
    \right. .
\end{equation}
Also, since $\left. \frac{\partial}{\partial \psi}B_\psi \right|_{\psi=0}=\div \psi -  \partial_\nu \psi_r \nu_r$, set $\beta_\psi(y)=\div \psi -  \partial_\nu \psi_r\nu_r$ we can write ${B}_\psi(y)=1+\beta(y)+o(|\psi|)$ and
\begin{equation}
   \bar{B}_\psi(x)=\left\{
    \begin{array}{cc}
    1&x\in \Omega\smallsetminus \Sigma_k\\
  1+\left(1-\frac tk\right)\beta_\psi(y)+o(|\psi|)  &x\in \Sigma_k
    \end{array}
    \right. .
\end{equation}
When no ambiguity arises, we simply write ${B}_\psi(x)$ instead of $ \bar{B}_\psi(x)$.

Now we can define the correct pull back map by means of $\gamma_\psi$ and  $B_\psi$ as follows.
\begin{align*}
\hat\gamma_\psi&:\mathcal{H}(\Omega_\psi)\rightarrow \mathcal{H}(\Omega);\\
\hat\gamma_\psi(\tilde{u})&:=\gamma_\psi(\tilde{u})B_\psi(x)=u(x)B_\psi(x)
\end{align*}

At this point, given $\tilde u,\tilde \varphi \in \mathcal{H}(\Omega_\psi)$ we define a bilinear form $\hat{\mathcal{A}}_\psi$  on $\mathcal{H}(\Omega)$ by pulling back by $\hat \gamma_\psi$ the bilinear form  $a_{\Omega_\psi} $ on $\mathcal{H} (\Omega_\psi)$ as follows:
\begin{equation}
a_{\Omega_\psi}(\tilde u,\tilde \varphi)=\int_{\Omega_\psi} \nabla \tilde u \nabla \tilde \varphi \ dx  =:\hat{\mathcal{A}}_\psi (\hat u,\hat \varphi) 
\end{equation}
where $\hat u=\hat \gamma_{\psi}\tilde u$ and $\hat\varphi=\hat\gamma_{\psi} \tilde \varphi$  and, as before, we define
 \begin{equation}
 T_\psi({\cdot }):=\hat\gamma_\psi E_{\Omega_\psi} \hat\gamma_\psi^{-1}({\cdot}).
 \end{equation}
The price we pay with this approach is that the identity which is crucial to find $T'_\psi(0)$ becomes
  \begin{multline}
       \label{eq:fond-bis}
     \hat{\mathcal{A}}_\psi (T_\psi{\hat u},\hat \varphi) =a_{\Omega_\psi}(E_{\psi}(\tilde u),\tilde \varphi)=\int_{S_\psi}  \tilde u \tilde \varphi\  d\sigma \\
     =\int_{S} u\varphi\ B_\psi d\sigma 
      =\int_{S} \hat u\hat \varphi\ \frac1{B_\psi} d\sigma 
 \    \text { for all }\varphi \in \mathcal{H}(\Omega). 
 \end{multline}

By the following Taylor expansion of $1/B_\psi$
\begin{multline}
    \label{derBpsi-bis}
    B_\psi^{-1}=1-\left(1-\frac tk\right)\beta_\psi(y)\mathbbm{1}_{\Sigma_k}+o(|\psi|)=\\
    =1-\left(1-\frac tk\right)\left(\div \psi -  \partial_\nu \psi_r\nu_r\right)\mathbbm{1}_{\Sigma_k}+o(|\psi|)
\end{multline}

we can differentiate (\ref{eq:fond-bis}), obtaining 
\begin{equation}\label{nosplit-bis}
a_\Omega(T_\psi '(0)[\psi]e_r,e_s)=
-\mu L_1(e_r,e_s)
-\mu I_1(e_r,e_s)+G(e_r,e_s).
\end{equation}
which is completely analogous to (\ref{cond1}), giving rise to the same no splitting condition. At this point the proof of Theorem \ref{th:SN} follows identically.

\bibliography{references}
\bibliographystyle{amsplain}

\end{document}